\documentclass[leqno,12pt]{amsart}
\usepackage[letterpaper,margin=1in]{geometry}
\usepackage{amsmath,amssymb,amsthm,mathtools}
\usepackage{microtype}
\usepackage[hidelinks]{hyperref}

\newtheorem{theorem}{Theorem}[section]
\newtheorem{lemma}[theorem]{Lemma}
\newtheorem{proposition}[theorem]{Proposition}
\newtheorem{corollary}[theorem]{Corollary}
\newtheorem{remark}[theorem]{Remark}
\newtheorem{example}[theorem]{Example}
\newtheorem{definition}[theorem]{Definition}

\newcommand{\R}{\mathbb R}
\newcommand{\Hh}{\mathcal H}
\newcommand{\Pp}{\mathcal P^+_{\lambda,\Lambda}}
\newcommand{\Pm}{\mathcal P^-_{\lambda,\Lambda}}
\newcommand{\Mplus}{\mathcal M^+_{\gamma,\Sigma,\mu_0}}
\newcommand{\Mminus}{\mathcal M^-_{\gamma,\Sigma,\mu_0}}

\newcommand{\tr}{\operatorname{tr}}

\makeatletter
\@namedef{subjclassname@2020}{\textup{2020} Mathematics Subject Classification}
\makeatother

\title[Equations with Unrectifiable Exceptional Gradient Sets]{Estimates on Fully Nonlinear Elliptic Equations with Unrectifiable Exceptional Gradient Sets}
\author[T. Phan]{Tuoc Phan}
\address[T. Phan]{Department of Mathematics, University of Tennessee, 227 Ayres Hall,
1403 Circle Drive, Knoxville, TN 37996-1320}
\email{tphan2@utk.edu}
\keywords{Fully nonlinear elliptic equations, Harnack inequality, H\"{o}lder regularity, viscosity solutions, purely one-unrectifiable sets}
\subjclass[2020]{35J60, 35B45, 35D40, 28A75}

\begin{document}
\maketitle
\vspace{-0.8cm}
\begin{abstract}
We study fully nonlinear uniformly elliptic equations that are imposed only when the gradients of solutions lie outside a closed ball and a closed purely one-unrectifiable set that can be unbounded. We prove Harnack inequalities and H\"{o}lder regularity estimates for viscosity solutions, with constants independent of the exceptional set. The proofs use the geometry of the contact sets in the cusp and barrier arguments to exclude exceptional gradients. We also provide examples and counterexamples showing the role of pure one-unrectifiability. Materials and proofs are presented in detail to ensure the paper is fully self-contained and accessible to a broad audience.
\end{abstract}
\section{Introduction and main results}
\subsection{Problem setting and main results} We study fully nonlinear equations that are imposed only when the gradients of solutions are outside a subset set of $\R^n$ that can be unbounded. To set up the problem, let \(\gamma\ge0\) and let \(\Sigma\subset\R^n\) be a closed set. The considered class of equations is imposed only when 
\[
 Du \not\in (\overline{B}_\gamma \cup \Sigma),
\]
where $u$ is an unknown viscosity solution, and $B_\gamma$ is the ball in $\R^n$ of radius $\gamma$ centered at the original. Thus the excluded gradient set is \(\overline B_\gamma\cup\Sigma\), and main point is that \(\Sigma\) can be unbounded.  This work provides sufficient geometric conditions on $\Sigma$ under which the Harnack and H\"{o}lder regularity estimates are proved for viscosity solutions. Note that if \(\Sigma\) is bounded, then the set  \(\overline B_\gamma\cup\Sigma\) is contained in a larger ball. In this case, the gradient threshold may be increased accordingly and the considered equations are reduced to the ones already studied in the literature such as \cite{ImbertSilvestre, Mooney}. 

\smallskip
To formulate the problem, let us denote $\mathbb S^n$ the set of $n\times n$ symmetric matrices. For given numbers \(0<\lambda<\Lambda\) and \(\mu_0\ge0\), we consider a fully nonlinear operator
\[
 F:\mathbb S^n\times\R^n\times B_1\to\R
\]
which is continuous and satisfies
\begin{equation} \label{F-cond}
 \Pm(X-Y)-\mu_0|p-q|\le F(X,p,x)-F(Y,q,x)\le \Pp(X-Y)+\mu_0|p-q|
\end{equation}
for all $X,Y\in\mathbb S^n$, $p,q\in\R^n$, and $x\in B_1$. Here, $\Pm$ and $\Pp$ respectively denote the minimal and maximal Pucci operators with the corresponding given ellipticity constants $\lambda$ and $\Lambda$. Note that no convexity or differentiability of \(F\) is assumed. We study the equation
\begin{equation} \label{intro-PDE}
 F(D^2u,Du,x)=f(x) \quad \text{in} \quad B_1 \quad \text{when} \quad Du \not\in (\overline{B}_\gamma \cup \Sigma),
\end{equation}
where $\Sigma \subset \R^n$ is a given closed set. Solutions of \eqref{intro-PDE} are understood in the viscosity sense as in Definition~\ref{def:restricted-viscosity} below.

\smallskip
To put the study in perspectives, let us point out several related results in the research line.  Caffarelli and Salazar studied fully nonlinear elliptic equations with patches of zero gradient \cite{CaffarelliSalazar02} in which the existence, Harnack inequalities, and H\"{o}lder estimates are proved. Imbert and Silvestre proved H\"{o}lder estimates and Harnack inequalities for fully nonlinear uniformly elliptic equations imposed only when $|Du|$ exceeds a fixed threshold \cite{ImbertSilvestre}. Mooney obtained a Harnack inequality for the same class of equations with unbounded drifts in \cite{Mooney}. Chang-Lara and Pimentel studied the Hamilton-Jacobi equation
$\min\{-\Delta u-1,|Du|-1\}=0$, which imposes a lower bound on the
gradient norm \cite{Chang}. They proved local Lipschitz
continuity of solutions and continuity of the gradient norm. Ara\'ujo, Sobral, and Teixeira in \cite{AraujoSobralTeixeira25} studied gradient-activated equations of the form $(|Du|-\kappa)_+^q\mathcal F(D^2u)=f$ in $\{|Du|>\kappa\}$, and obtaining gradient bounds and continuity of $Du$ up to the free boundary.  See also \cite{Chang-1} for further discussion and related open problems in this research direction. The gradient restrictions in these equations concern zero or small gradients. Our primary focus in this paper is to demonstrate that results on Harnack inequalities and H\"{o}lder estimates still hold  even  the equations \eqref{intro-PDE} are not imposed when \(Du \in \Sigma\) with \(\vert{}Du\vert{}\) is arbitrarily large.

\smallskip
Aside from the mathematical research \cite{AraujoSobralTeixeira25, CaffarelliSalazar02, Chang, ImbertSilvestre, Mooney} that we discussed above, equations with gradient restrictions occur in models from other fields such as porous flow, superconductivity, and optimal control theory. Pascal studied non-steady flow in a one-dimensional porous reservoir with a threshold pressure gradient \cite{Pascal81}. A simple threshold law for the pressure $p$ and flux $q$ is
\[
 q=-k(|p_x|-\gamma)_+\operatorname{sgn}(p_x),\qquad
 q_x=-k p_{xx}\quad\text{when }|p_x|>\gamma,
\]
where $k>0$ and $\gamma\ge0$ are constants. Hence the stationary balance $q_x=0$ gives $p_{xx}=0$ when $|p_x|>\gamma$. Nash and Rees studied the Buckingham-Reiner models for the one-dimensional flow of a Bingham fluid
along a uniform pipe or channel with more
general one-dimensional configurations and they showed how the geometry of the pores changes the relation between flux and the threshold pressure gradient \cite{NashRees17}. In a simplified stationary model for superconducting vortices, Caffarelli, Salazar, and Shahgholian studied
\[
 \Delta h=h\chi_{\{|Dh|>0\}},\qquad h\ge0,
\]
which gives $\Delta h=h$ where $|Dh|>0$ \cite{CaffarelliSalazarShahgholian04}. Here the right-hand side depends on $h$. The Bean critical-state model has the gradient constraint $|Dh|\le J_c$ in the cylindrical reduction \cite{Prigozhin96}. Note also that the considered equations \eqref{intro-PDE} are closely related to the Hamilton-Jacobi-Bellman equations studied in \cite{Dammann, Evans-1, Hynd}, for example,
\begin{equation} \label{HJB-oct-7}
\max\left\{
 F(D^2u, Du, u, x)-f(x),\,
H(Du)\right\} = 0 \quad \text{in} \quad \Omega \subset \R^n,
\end{equation}
for some given open domain $\Omega$. Observe that \eqref{HJB-oct-7} gives the equation $F(D^2u, Du, u, x)-f(x) =0$ in $\Omega$ when $Du \not\in \{ p \in \R^n: H(p) \geq 0\}$. These mentioned models motivate equations imposed in an active gradient region. The additional exclusion of $\Sigma$ in \eqref{intro-PDE} is our geometric assumption.

\smallskip
Now, we explain and introduce the assumption that we impose on $\Sigma$. We note that a condition for $\Sigma$ based only on Lebesgue measure or density cannot be sufficient. For example, for \(n\ge2\), consider \(\Sigma=\R e_1\), then \(\Sigma\) has zero \(n\)-dimensional measure, but every function depending only on \(x_1\) has its gradient in \(\Sigma\), so the equation may be inactive everywhere. The same obstruction occurs for a ray. The relevant issue is therefore whether the exceptional set contains one-dimensional channels that can carry the gradient image. Our hypothesis is that \(\Sigma\) is purely one-unrectifiable, that is
\[
 \Hh^1(\Sigma\cap\Gamma)=0
\]
for every Lipschitz curve \(\Gamma\subset\R^n\), where $ \Hh^1$ denotes the \(1\)-dimensional Hausdorff measure.  The precise definition and discussion on the purely one-unrectifiable sets and their properties are given in Section \ref{sec:geometry} below.

\smallskip
We emphasize that the purely one-unrectifiable concept is classical in geometric measure theory. For sets of finite length, the decomposition into rectifiable and purely unrectifiable parts and the Besicovitch-Federer projection theorem are basic structure results; see \cite{Federer, Mattila}. Quantitative rectifiability is closely related to singular integrals and harmonic analysis; see David and Semmes \cite{DavidSemmes}. In the plane, pure unrectifiability also occurs in analytic capacity, the Cauchy transform, and the Painlev\'e problem; see Tolsa \cite{Tolsa}. The four-corner Cantor set is the standard example of a purely one-unrectifiable set with positive and finite one-dimensional Hausdorff measure. See Remark \ref{remark-oct-7} and Section \ref{Example-Sec} for more examples on purely one-unrectifiable sets. On the other hand, we also point out that the condition $\dim_{\mathcal{H}}E=1$  alone does not imply that \(E\) is purely one-unrectifiable.  Indeed, it follows from the decomposition theorem for 1-sets (\cite[Theorem 15.6, p. 205]{Mattila} that for any Borel set $E \subset \R^n$ satisfying $\Hh^1(E) \in (0, \infty)$,  $E$ admits a pairwise-disjoint decomposition
\[
E= E_r \cup E_u \cup E_0
\]
where $E_u$ is purely one-unrectifiable, $E_r$ is one-rectifiable, and $\Hh^1(E_0) =0$. Moreover, after modifying the exact choice of the decomposition, we may assume that both the
sets $E_r, E_u \subset \R^n$ are Borel sets. See  \cite{Aliaga, Bongers} for some examples and details on geometric properties and applications of purely one-unrectifiable sets.

\smallskip
Our first main result is the following Harnack inequalities for viscosity solutions to \eqref{intro-PDE}. 
\begin{theorem}[Harnack inequalities]\label{thm:harnack-intro} For every $0<\lambda<\Lambda$, and $\mu_0\geq 0$, there is $C = C(n,\lambda,\Lambda,\mu_0)>0$ such that the following assertion holds. Let $\Sigma\subset\R^n$ be closed and purely one-unrectifiable,  $F$ satisfy the structural condition \eqref{F-cond} and $F(0, 0,\cdot) \in L^\infty(B_1)$, and $f\in C(B_1)\cap L^\infty(B_1)$. Suppose that  $u\in C(B_1)$ is a non-negative viscosity solution of \eqref{intro-PDE}. Then
\[
 \sup_{B_{1/2}}u\le C\left(\inf_{B_{1/2}}u+ \|f\|_{L^\infty(B_1)}+\|F(0,0,\cdot)\|_{L^\infty(B_1)} +\gamma\right).
\]
\end{theorem}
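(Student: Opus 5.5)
We follow the Imbert--Silvestre strategy for equations imposed only where $|Du|>\gamma$: prove an $L^\varepsilon$ (weak Harnack) estimate for supersolutions, then combine it with a local maximum principle for subsolutions. The only new ingredient is to show that at the touching points used in these arguments the gradient can be pushed off $\Sigma$. By scaling $u\mapsto u/K$ with $K=\inf_{B_{1/2}}u+\|f\|_\infty+\|F(0,0,\cdot)\|_\infty+\gamma$ (which rescales $\gamma$ to at most $1$ and $\Sigma$ to $\Sigma/K$, still closed and purely one-unrectifiable), it suffices to prove $\sup_{B_{1/2}}u\le C$ when $\inf_{B_{1/2}}u\le 1$, $\|f\|_\infty+\|F(0,0,\cdot)\|_\infty\le 1$ and $\gamma\le 1$. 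The constants must not depend on $\Sigma$, so every use of $\Sigma$ has to be qualitative.

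\textbf{Step 1 (measure estimate).} We aim to prove the basic lemma: if $u\ge0$ is a supersolution in $B_1$ with $\inf_{B_{1/4}}u\le1$, then $|\{u\le M\}\cap B_{1/2}|\ge \mu|B_{1/2}|$ for universal $M,\mu$. Following the cusp method, we slide paraboloids (cusps) $P_y(x)=-\frac{A}{2}|x-y|^2+c$ with vertices $y$ in a fixed ball $B'$ from below until they touch $u$. The touching points $x$ lie in $\{u\le M\}$, and the map $x\mapsto y=x+Du(x)/A$ (in the semiconcave/viscosity sense) sends the contact set onto $B'$. Since the inverse map is Lipschitz, we get $|B'|\le C(A)\,|\text{contact set}|$. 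At a contact point the test function has gradient $p=-A(x-y)$, and we need $p\notin\overline B_\gamma\cup\Sigma$ in order to apply the supersolution inequality and obtain the Pucci bound used in the area estimate. Excluding $|p|\le\gamma$ works as in \cite{ImbertSilvestre}: we choose $B'$ so that $|x-y|\ge c>0$ and $A c>\gamma$, which is allowed since $\gamma\le1$.

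\textbf{Step 2 (excluding $\Sigma$).} This is the main obstacle. The vertices $y$ for which the touching gradient $A(y-x)$ lies in $\Sigma$ form the set $Y_\Sigma=\{y\in B': A(y-x(y))\in\Sigma\}$. We want to show $|Y_\Sigma|=0$, so that $\Sigma$ does not affect the measure count. The plan is as follows. Restrict to directions: for each fixed $x$, the vertices $y$ with gradient in $\Sigma$ form $x+\Sigma/A$. A full-measure set of vertices would force $\Sigma$ to be large; but $\Sigma$ may still have positive $n$-measure even though it is purely unrectifiable, so measure alone will not do. Instead we use a perturbation argument that exploits pure unrectifiability. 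Move the vertex along a segment $y+t e$. The touching gradients then trace a curve $t\mapsto p(t)$, and we will argue that this curve is Lipschitz, using semiconcavity of the lower envelope. Since $\mathcal H^1(\Sigma\cap\Gamma)=0$ for such a curve $\Gamma$, a.e.\ $t$ gives a gradient outside $\Sigma$. Fubini over parallel segments then gives $|Y_\Sigma|=0$. If Lipschitz dependence fails in degenerate cases, we would try as a fallback adding a small linear perturbation $\ell\cdot x$ with $\ell$ along a generic line. This shifts the gradient along a line, which meets $\Sigma$ in an $\mathcal H^1$-null set, and we pass to the limit using closedness of $\Sigma$ and stability of viscosity inequalities.

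\textbf{Step 3 (conclusion).} Iterating Step 1 with the Calder\'on--Zygmund cube decomposition gives $|\{u>t\}\cap B_{1/2}|\le Ct^{-\varepsilon}$. For the subsolution side we use the standard Imbert--Silvestre argument. Suppose $u(x_0)$ is huge. The $L^\varepsilon$ bound applied at small scales to $\theta$-rescaled shifts $\nu-u$ yields a sequence of points where $u$ grows geometrically and which converges, contradicting continuity. Each application of Step 1 at scale $r$ changes $\gamma$ to $r\gamma/(\text{height})$, which stays bounded by the choice of heights, and changes $\Sigma$ to a rescaled copy that is still purely unrectifiable. Hence the constants remain universal, and the result is $\sup_{B_{1/2}}u\le C$.
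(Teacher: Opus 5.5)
Your skeleton (measure estimate, then $L^\varepsilon$, then Harnack) matches the paper's, but there is a genuine gap in Step 2, the only genuinely new step. The inference ``$\Hh^1(\Sigma\cap\Gamma)=0$, so $p(t)\notin\Sigma$ for a.e.\ $t$'' is false even for a Lipschitz curve. The one-dimensional area formula only gives $p'(t)=0$ for a.e.\ $t$ with $p(t)\in\Sigma$, and the curve may be stationary on a set of positive measure.

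With concave paraboloids this really happens, and the target claim $|Y_\Sigma|=0$ is false. Take $\gamma=0$, $\Sigma=\{p_0\}$ with $|p_0|=1$, and $u(x)=1+p_0\cdot x$. This is a nonnegative supersolution in $B_1$ with $\inf_{B_{1/4}}u\le1$. For $A\ge2$, every paraboloid with vertex $y\in B'\subset B_{1/2}$ touches at $y-p_0/A$ with gradient $p_0$, so $Y_\Sigma=B'$. Nor is $y\mapsto p(y)$ Lipschitz: it is the gradient of the semiconcave function $y\mapsto\inf_x\{u(x)+\frac A2|x-y|^2\}$, so it is only $BV$, and contact points can jump.

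What is missing is a nondegeneracy condition. The contact-gradient map must have derivative of rank at least one, so that some directional derivative is bounded below on a set of positive measure; Fubini plus the area formula then contradict $\Hh^1(\Sigma\cap g(I))=0$. The paper gets this from $D^2u\ge D^2\phi$, where the cusp $\phi=-10|z|^{1/2}$ has the positive radial eigenvalue $\frac52r^{-3/2}$. It also parametrizes by contact points, where $Du$ is Lipschitz, rather than by vertices. A concave paraboloid only gives $D^2u\ge-AI$, which allows $D^2u=0$.

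Your fallback does not repair this. A tilt $s\ell\cdot x$ also moves the contact point, so the contact gradient does not simply slide along $p+\R\ell$. Moreover, limits of active gradients may lie in $\Sigma$, where no inequality is available.

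There are two further gaps. First, in Step 1 you cannot choose $B'$ so that $|x-y|\ge c$. Nothing prevents a contact point from lying arbitrarily close to its vertex (e.g.\ near a local minimum of $u$), where $A|x-y|\le\gamma$. This is exactly why Imbert--Silvestre slide cusps, whose gradient $5|z|^{-1/2}$ is large near the vertex; the paper places vertices in $\{u>M_0\}$ and contacts in $\{u\le M_0-1\}$.

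Second, in Step 3 your measure lemma gives a pointwise bound only on a ball smaller than the one carrying the measure information. The covering iteration therefore needs a doubling step, i.e.\ comparison with a radial barrier built from $|x|^{-p}$. The barrier's gradient may lie in $\Sigma$, so this comparison needs its own exclusion argument, which you do not address. The paper tilts the barrier by all $q\in\overline B_\eta$. It shows the tilted contact set $T$ has positive measure, because $Q=Du_\varepsilon-D\psi$ is Lipschitz on $T$ with $Q(T)=\overline B_\eta$. It then uses the positive eigenvalue of $D^2\psi$ to apply the same exclusion lemma on $T$.

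Finally, contrary to your remark in Step 2, a purely one-unrectifiable set always has Lebesgue measure zero: otherwise, by Fubini, some line would meet it in a set of positive length.
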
\noindent
In this paper, we also prove the following result on the H\"{o}lder regularity estimates for \eqref{intro-PDE}, which is a direct consequence of Theorem~\ref{thm:harnack-intro}.
\begin{theorem}[H\"{o}lder estimates]\label{thm:holder-intro} For every  $0<\lambda<\Lambda$,  and $\mu_0\geq 0$, there are $\alpha=\alpha(n,\lambda,\Lambda,\mu_0)\in(0,1)$ and $C=C(n,\lambda,\Lambda,\mu_0)>0$ such that the following assertions hold.
 Let $\Sigma\subset\R^n$ be closed and purely one-unrectifiable,  $F$ satisfy  the structural condition \eqref{F-cond} and $F(0, 0,\cdot) \in L^\infty(B_1)$, and $f\in C(B_1)\cap L^\infty(B_1)$. Suppose that $u\in C(B_1)\cap L^\infty(B_1)$ is a viscosity solution of \eqref{intro-PDE} in $B_1$. Then $u \in C^\alpha(B_{1/2})$ and 
\[
 [u]_{C^\alpha(B_{1/2})}\le C\left(K+K^{1-\alpha}\gamma^\alpha\right),
\]
where
\[
K=\|u\|_{L^\infty(B_1)}+\|f\|_{L^\infty(B_1)}+\|F(0,0,\cdot)\|_{L^\infty(B_1)}.
\]
\end{theorem}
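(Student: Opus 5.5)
We derive Theorem~\ref{thm:holder-intro} from Theorem~\ref{thm:harnack-intro} by the standard oscillation-decay iteration, with scaling arguments adapted to the gradient threshold $\gamma$. The set $\Sigma$ need not be preserved by the rescalings; we only need the class of closed purely one-unrectifiable sets to be invariant.

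\textbf{Invariance of the class.} Let $u$ solve \eqref{intro-PDE} in $B_r(x_0)$ and set $v(y)=(u(x_0+ry)-c)/M$ for $M>0$ and $c\in\R$. Then $Dv(y)=(r/M)Du(x_0+ry)$ and $D^2v=(r^2/M)D^2u$. Hence $v$ solves $G(D^2v,Dv,y)=\tilde f(y)$ in $B_1$ whenever $Dv\notin \overline B_{r\gamma/M}\cup (r/M)\Sigma$, where
\[
G(X,p,y)=\frac{r^2}{M}F\Big(\frac{M}{r^2}X,\frac{M}{r}p,x_0+ry\Big),\qquad \tilde f=\frac{r^2}{M}f .
\]
The operator $G$ satisfies \eqref{F-cond} with the same $\lambda,\Lambda$ and drift constant $r\mu_0\le\mu_0$. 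The sets $(r/M)\Sigma$ are closed and purely one-unrectifiable, since dilations map Lipschitz curves to Lipschitz curves and scale $\Hh^1$. Moreover $\|G(0,0,\cdot)\|_\infty=(r^2/M)\|F(0,0,\cdot)\|_\infty$. Since the constant in Theorem~\ref{thm:harnack-intro} depends only on $(n,\lambda,\Lambda,\mu_0)$ and is monotone in $\mu_0$ (or we simply use $\mu_0$ as an upper bound), it applies uniformly along the iteration. Adding a constant and multiplying by $-1$ are also allowed: $-u$ solves an equation of the same type with exceptional set $-\Sigma$ and operator $-F(-X,-p,x)$.

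\textbf{Oscillation decay.} Fix $x_0\in B_{1/2}$ and $r\le 1/4$, and let $M_r=\sup_{B_r(x_0)}u$ and $m_r=\inf_{B_r(x_0)}u$. Apply Theorem~\ref{thm:harnack-intro}, after rescaling $B_r(x_0)$ to $B_1$, to the nonnegative functions $u-m_r$ and $M_r-u$. Scaling back, we get
\[
M_r-m_{r/2}\le C\big(m_{r/2}-m_r+r^2A+r\gamma\big),\qquad M_r-m_{r/2}...
\]
more precisely
\[
\sup_{B_{r/2}}(u-m_r)\le C\big(\inf_{B_{r/2}}(u-m_r)+r^2A+r\gamma\big),\qquad A=\|f\|_\infty+\|F(0,0,\cdot)\|_\infty .
\]
The analogous bound holds for $M_r-u$. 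The term $r\gamma$ appears because the threshold rescales as $r\gamma/M$ and is then multiplied back by $M$. Adding the two inequalities gives
\[
\operatorname{osc}_{B_{r/2}}u\le\theta\,\operatorname{osc}_{B_r}u+C(r^2A+r\gamma),\qquad \theta=\frac{C-1}{C+1}<1 .
\]

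\textbf{Iteration.} Iterating with $r_k=4^{-1}2^{-k}$ and choosing $\alpha<\min\{1,\log_2(1/\theta)\}$, the standard lemma gives
\[
\operatorname{osc}_{B_\rho(x_0)}u\le C\rho^\alpha(K+\gamma).
\]
This already yields a Hölder bound, but with $\gamma$ in place of $K^{1-\alpha}\gamma^\alpha$.

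\textbf{Refined bound.} To obtain the stated form, apply this estimate to the normalized function $w=u/K$ (assume $K>0$; if $K=0$ then $u\equiv0$). Then $\|w\|_\infty+\|\tilde f\|_\infty+\|G(0,0,\cdot)\|_\infty\le1$, and the threshold becomes $\gamma/K$. Consider two cases.
\begin{itemize}
\item If $\gamma\le K$, the bound is $C(1+\gamma/K)\le 2C$.
\item If $\gamma>K$, rescale instead at the scale $s=K/\gamma<1$. On balls of radius $\rho\le s$ the oscillation estimate gives $\operatorname{osc}_{B_\rho}u\le C\rho^\alpha(K+\gamma)s^{-\alpha}\cdot s^{\alpha}$. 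Indeed, after rescaling $B_s$ to unit size, the threshold becomes $s\gamma/K=1$ and the data terms are at most $K$, so $\operatorname{osc}_{B_\rho}u\le C K(\rho/s)^\alpha=CK^{1-\alpha}\gamma^\alpha\rho^\alpha$. For $\rho\ge s$, the trivial bound $2K\le 2K(\rho/s)^\alpha$ gives the same estimate.
\end{itemize}
Combining the two cases yields $[u]_{C^\alpha(B_{1/2})}\le C(K+K^{1-\alpha}\gamma^\alpha)$.

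The main care point is the bookkeeping in the scaling step: checking that Theorem~\ref{thm:harnack-intro} is applicable with uniform constants. This requires $\mu_0$ to decrease under rescaling, the exceptional class to be closed under dilation and reflection, and the threshold to transform as $r\gamma/M$. Once this is in place, the rest is routine.
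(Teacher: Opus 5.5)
Your proposal is correct and follows essentially the paper's route: Harnack applied to $u-m$ and $M-u$ on rescaled balls gives $\operatorname{osc}_{B_{r/2}}u\le\theta\operatorname{osc}_{B_r}u+C(r\gamma+r^2C_F)$, and the factor $K^{1-\alpha}\gamma^\alpha$ comes from starting the decay at scale $\sim K/\gamma$. The paper does this in one pass by starting its iteration at $R_0=\frac14\min\{1,K/\gamma\}$, whereas you first prove the crude $C(K+\gamma)$ bound and then rescale at $s=K/\gamma$, which is equivalent; the only bookkeeping fix is that your second case needs $B_s(x_0)\subset B_1$, so split at $\gamma\le 2K$ rather than $\gamma\le K$, giving $s<1/2$.
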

\smallskip
\noindent
We note that Theorems \ref{thm:harnack-intro}-\ref{thm:holder-intro} apply without convexity or concavity assumption on $F$. For example,
\[
 F(X,p,x)=\tr(A(x)X)+G(X,x)+b(x)\cdot p,
\]
where \(A\) and \(b\) are continuous, \(\lambda I\le A(x)\le\Lambda I\), \(b\) is bounded, and \(G\) is any continuous uniformly elliptic perturbation for which \eqref{F-cond} holds. They also apply to Bellman and Isaacs operators
\[
 F(X,p,x)=\inf_\alpha\sup_\beta\{\tr(A_{\alpha\beta}(x)X)+b_{\alpha\beta}(x)\cdot p+c_{\alpha\beta}(x)\},
\]
provided the coefficient families are equicontinuous in $x$ on compact subsets, $A_{\alpha\beta}$ satisfies the standard ellipticity and boundedness conditions, and $b_{\alpha\beta}, c_{\alpha\beta}$ are uniformly bounded. The PDE associated with these operators are not  required when the solution gradients are in \(\overline B_\gamma\cup\Sigma\). It is an interesting problem to see if Theorems \ref{thm:harnack-intro}-\ref{thm:holder-intro} can be extended to the case with singular drifts as in \cite{Mooney, Safonov-1}, and we plan to investigate this problem in the near future.
\subsection{Difficulties and approaches} To prove Theorem \ref{thm:harnack-intro}, we follow and adjust the approach introduced in \cite{ImbertSilvestre}, see also \cite{Krylov, Safonov, Safonov-1} and the recent work \cite{CFP} for the Krylov-Safonov approach. There are two main new difficulties compared with \cite{ImbertSilvestre} due to $\Sigma$ which is allowed to be unbounded. The first occurs in the cusp argument. For the cusp \(\phi(x)=-10|x|^{1/2}\), the original proof chooses the gradient threshold so that every cusp contact has \(|D\phi|>\gamma\). In our setting this is no longer enough, because the contact gradient may belong to \(\Sigma\), even when it is arbitrarily large. At a cusp contact, however, \(D^2u\ge D^2\phi\), and the radial eigenvalue of \(D^2\phi\) is positive. Hence the derivative of the gradient map has rank at least one at almost every twice differentiable contact point. We prove that a Lipschitz gradient map with this property cannot take values in a purely one-unrectifiable set on a set of positive measure. The proof is elementary: after a Lipschitz extension, one uses density points, Fubini's theorem, and the one-dimensional area formula. It follows that the equation is active at almost every cusp contact point, and the Jacobian estimate of the cusp map can then be recovered.

\smallskip
The second difficulty occurs in the doubling argument. In \cite{ImbertSilvestre}, the radial barrier is chosen so that \(|DB|>\gamma\), and direct comparison applies. Here \(|DB|>\gamma\) does not exclude \(DB\in\Sigma\). We therefore perturb the barrier by small affine functions. The corresponding tilted contact set has positive measure; on this set the contact-gradient map is Lipschitz and its derivative again has rank at least one. The same exclusion lemma yields an active contact point, at which the strict Pucci inequality gives the contradiction. This tilted-barrier argument is needed only because the exceptional set may occur at large gradients.

\smallskip
Note also that compared to \cite{ImbertSilvestre}, the structure and organization of our proofs of main results are different. After the measure estimate and doubling property, we obtain the \(L^\varepsilon\)-estimate as in \cite{ImbertSilvestre}. We then prove Harnack's inequality first. H\"{o}lder continuity is derived afterward from Harnack by applying it to the two non-negative functions
\[
 u-m_R+\frac{R\gamma}{2},\qquad M_R-u+\frac{R\gamma}{2},
\]
where \(M_R\) and \(m_R\) are the supremum and infimum of the solution $u$ on a ball of radius \(R\). The additive term \(R\gamma/2\) keeps the normalized gradient threshold uniformly bounded, and the two Harnack inequalities give a quantitative decay of the oscillation. This yields the H\"{o}lder estimate directly from Harnack. 
\subsection{Paper organization} The remaining parts of the paper is organized as follows. Section~\ref{Prelim-Sec} gives various definitions and estimates needed for the paper. These include the viscosity formulation, the reduction to the extremal inequalities, the geometric contact-gradient lemma. Section~\ref{Cusp-sec} contains the cusp measure estimate. The tilted-barrier comparison and doubling property are presented in Section \ref{doubling-property}. Section~\ref{L-eps-Sec} is about the $L^\varepsilon$-estimates. The proofs of  Theorem~\ref{thm:harnack-intro} and Theorem~\ref{thm:holder-intro} are then provided in Section~\ref{Proof-Sec}. In the last section, Section~\ref{Example-Sec}, we provide examples and counterexamples on our assumption for $\Sigma$.
\section{Definitions and Preliminary Estimates} \label{Prelim-Sec}
\subsection{Pucci operators and extremal reduction}\label{sec:fully-nonlinear} Let $0 < \lambda < \Lambda$ be given numbers. Then, the maximal and minimal Pucci operators with the corresponding ellipticity constants $\lambda$ and $\Lambda$ are defined as
\[
 \Pp(X)=\Lambda\sum_{e_j>0} e_j  + \lambda\sum_{e_j<0} e_j,\quad \text{and}  \quad \Pm(X)=\lambda \sum_{e_j>0} e_j+ \Lambda \sum_{e_j<0} e_j,
\]
for $X\in\mathbb S^n$, where $\{e_j\}_{j=1}^n$ are the eigenvalues of $X$. We assume that $F=F(X,p,x)$ is continuous and it satisfies
\begin{equation}\label{F-structure}
 \Pm(X-Y)-\mu_0|p-q|\le F(X,p,x)-F(Y,q,x)\le \Pp(X-Y)+\mu_0|p-q|
\end{equation}
for every $X,Y\in\mathbb S^n$, $p,q\in\R^n$, and for every $x$ in the corresponding considered domain. 
\begin{definition} \label{def:restricted-viscosity}
Let $\Omega\subset\R^n$ be open, $\gamma\ge0$, and $\Sigma\subset\R^n$ be closed. A function $u\in C(\Omega)$ is a viscosity solution of
\begin{equation} \label{main-eqn}
 F(D^2u,Du,x)=f(x) \quad \text{in} \quad \Omega \quad \text{when} \quad Du \not\in (\overline{B}_\gamma \cup \Sigma)
\end{equation}
with some given $f\in C(\Omega)$ if the following conditions hold. If $\phi\in C^2(\Omega)$ touches $u$ from below at $x_0\in\Omega$ with $|D\phi(x_0)|>\gamma$, $D\phi(x_0)\notin\Sigma$, then
\[
 F(D^2\phi(x_0),D\phi(x_0),x_0)\le f(x_0).
\]
If $\phi$ touches $u$ from above at $x_0$ and the same gradient condition holds, then
\[
 F(D^2\phi(x_0),D\phi(x_0),x_0)\ge f(x_0).
\]
\end{definition}
\noindent
We note that the definition agrees with the usual viscosity definition except that we impose no equation when $|D\phi(x_0)|\le\gamma$ or $D\phi(x_0)\in\Sigma$.

\smallskip
Following the convention used in \cite{ImbertSilvestre}, we define the extremal operators
\[
 \Mplus(X,p)=\begin{cases}\Pp(X)+\mu_0|p|,&\  p\not \in(\overline{B}_\gamma \cup \Sigma),\\ +\infty,&\  p\in (\overline{B}_\gamma \cup \Sigma),\end{cases}
\]
and
\[
 \Mminus(X,p)=\begin{cases}\Pm(X)-\mu_0|p|,& \  p\not \in(\overline{B}_\gamma \cup \Sigma) ,\\ -\infty,&\  p \in(\overline{B}_\gamma \cup \Sigma),\end{cases}
\]
for $(X, p) \in \mathbb{S}^n \times \R^n$. As in Definition \ref{def:restricted-viscosity}, the  inequalities involving $\Mminus(D^2u, Du)$ with lower semi-continuous function $u$ (and respectively and $\Mplus(D^2u, Du)$  with upper semi-continuous function $u$) are understood in the viscosity sense.

\begin{proposition} \label{prop:extremal-reduction} Let $F$ satisfy \eqref{F-structure}, $u \in C(\Omega)$ be a viscosity solution of \eqref{main-eqn} with $f \in C(\Omega) \cap L^\infty(\Omega)$. Assume that 
\[
 C_F=\|f\|_{L^\infty(\Omega)}+\|F(0,0,\cdot)\|_{L^\infty(\Omega)} < \infty.
\]
Then, in the viscosity sense,
\[
 \Mminus(D^2u,Du)\le C_F, \quad \text{and} \quad \Mplus(D^2u,Du)\ge-C_F \quad \text{in} \quad \Omega.
\]
\end{proposition}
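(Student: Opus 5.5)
The plan is to verify the two viscosity inequalities directly from Definition~\ref{def:restricted-viscosity} and the structure condition \eqref{F-structure}, one test function at a time. By symmetry it suffices to treat $\Mminus(D^2u,Du)\le C_F$; the supersolution-type inequality $\Mplus(D^2u,Du)\ge -C_F$ follows by the same argument with the roles of the upper and lower bounds in \eqref{F-structure} exchanged.

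First we fix what the inequality $\Mminus(D^2u,Du)\le C_F$ means in the viscosity sense. We take $\phi\in C^2(\Omega)$ touching $u$ from below at $x_0\in\Omega$, and we must show $\Mminus(D^2\phi(x_0),D\phi(x_0))\le C_F$. We split into two cases according to $p=D\phi(x_0)$.

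\emph{Case 1: $p\in\overline B_\gamma\cup\Sigma$.} By the definition of the extremal operator, $\Mminus(D^2\phi(x_0),p)=-\infty\le C_F$, and there is nothing to prove.

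\emph{Case 2: $p\notin\overline B_\gamma\cup\Sigma$, i.e.\ $|p|>\gamma$ and $p\notin\Sigma$.} Definition~\ref{def:restricted-viscosity} then gives $F(D^2\phi(x_0),p,x_0)\le f(x_0)$. We apply the left inequality of \eqref{F-structure} with $X=D^2\phi(x_0)$, $Y=0$, $q=0$ and the point $x_0$:
\[
\Pm(D^2\phi(x_0))-\mu_0|p|\le F(D^2\phi(x_0),p,x_0)-F(0,0,x_0)\le f(x_0)-F(0,0,x_0)\le C_F .
\]
Since in this case the left-hand side is exactly $\Mminus(D^2\phi(x_0),p)$, the inequality follows.

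For the other inequality we take $\phi$ touching $u$ from above at $x_0$. If $D\phi(x_0)\in\overline B_\gamma\cup\Sigma$, the value is $+\infty\ge -C_F$. Otherwise Definition~\ref{def:restricted-viscosity} gives $F(D^2\phi(x_0),D\phi(x_0),x_0)\ge f(x_0)$, and the right inequality of \eqref{F-structure} gives
\[
\Pp(D^2\phi(x_0))+\mu_0|D\phi(x_0)|\ge F(D^2\phi(x_0),D\phi(x_0),x_0)-F(0,0,x_0)\ge f(x_0)-F(0,0,x_0)\ge -C_F .
\]
The only point requiring care is that the test functions allowed for the extremal inequalities are the same class ($C^2$ functions touching at a point) as in Definition~\ref{def:restricted-viscosity}, so no approximation argument is needed. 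I expect no genuine obstacle here; the statement is essentially a bookkeeping consequence of the conventions $\pm\infty$ on the excluded gradient set.
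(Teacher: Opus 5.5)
Your proposal is correct and matches the paper's proof: you test with $\phi$ touching from below (resp.\ above) at an active gradient, invoke Definition~\ref{def:restricted-viscosity}, and apply \eqref{F-structure} with $Y=0$, $q=0$ to bound by $f(x_0)-F(0,0,x_0)$. The only difference is that you explicitly dispose of the excluded-gradient case through the $\pm\infty$ convention, which the paper leaves implicit.
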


\begin{proof} The proof is standard, see \cite{CaffarelliCabre} for example. However, as it is not long and for completeness, we provide the details. Let $\phi \in C^2$ touch $u$ from below at $x_0$ and assume $p=D\phi(x_0) \not \in (\overline{B}_\gamma \cup \Sigma)$. With $X=D^2\phi(x_0)$, Definition~\ref{def:restricted-viscosity} and \eqref{F-structure} give
\[
 \Pm(X)-\mu_0|p|\le F(X,p,x_0)-F(0,0,x_0)\le f(x_0)-F(0,0,x_0)\le C_F.
\]
This proves the lower extremal inequality. If $\phi$ touches from above at an active gradient, then
\[
 \Pp(X)+\mu_0|p|\ge F(X,p,x_0)-F(0,0,x_0)\ge f(x_0)-F(0,0,x_0)\ge-C_F,
\]
which proves the upper inequality.
\end{proof}
\subsection{Purely one-unrectifiable sets and contact gradients}\label{sec:geometry}
We begin with recalling the definition of \(s\)-dimensional Hausdorff measure. Let \(s\geq 0\), \(\delta>0\), and \(E\subset \mathbb{R}^n\). The
\(s\)-dimensional Hausdorff content at the scale \(\delta\) is defined by
\[
\mathcal{H}^s_\delta(E)=
\inf\Big\{
\sum_{j=1}^\infty
\bigl(\operatorname{diam} U_j\bigr)^s: E\subset \bigcup_{j=1}^\infty U_j,
\quad
\operatorname{diam} U_j\leq \delta
\Big\}.
\]
Then, the \(s\)-dimensional Hausdorff measure of \(E\) is defined by
\[
\mathcal{H}^s(E)
=
\lim_{\delta\downarrow 0}\mathcal{H}^s_\delta(E)
=
\sup_{\delta>0}\mathcal{H}^s_\delta(E).
\]
Note that a dimensional normalization constant is sometimes included in the definition of $\mathcal{H}^s_\delta$, see \cite[Definition 2.1, p. 81]{Evans}. Such a normalization does not affect Hausdorff dimension or the notion of \(\mathcal{H}^1\)-null sets. 
Also, it follows directly from the definition that
\[
\mathcal{H}^1(E)
=
\lim_{\delta\downarrow 0}
\inf\Big\{
\sum_{j=1}^\infty \operatorname{diam} U_j
:
E\subset \bigcup_{j=1}^\infty U_j,
\quad
\operatorname{diam} U_j\leq \delta
\Big\}.
\]
Recall also that the Hausdorff dimension of a measurable set \(E \subset \R^n\) is defined by
\[
\dim_{\mathcal{H}} E =
\inf\left\{
s\geq 0:\mathcal{H}^s(E)=0
\right\}.
\]
It then follows from the definition that if $\dim_{\mathcal{H}}E=d$, 
then
\[
\mathcal{H}^s(E)=\infty
\quad\text{for }s<d,
\quad \text{and} \quad \mathcal{H}^s(E)=0
\quad\text{for }s>d.
\]
At the critical exponent \(s=d\), the quantity
\(\mathcal{H}^d(E)\) may be zero, finite and positive, or infinite. Interested readers can find in \cite[Chapter 2]{Evans} for more details on the \(s\)-dimensional Hausdorff measure.

\smallskip
We provide the following definition of the purely one-unrectifiable sets. 
\begin{definition} A set \(\Sigma\subset\R^n\) is \emph{purely one-unrectifiable} if
\[
 \Hh^1(\Sigma\cap g(I))=0
\]
for every bounded interval \(I\subset\R\) and every Lipschitz map \(g:I\to\R^n\).
\end{definition}
\noindent
\smallskip
The following elementary facts will be used repeatedly in the paper.
\begin{lemma}\label{lem:stability-sigma}
The following statements hold.
\begin{enumerate}
\item If \(\Hh^1(\Sigma)=0\), then \(\Sigma\) is purely one-unrectifiable.
\item A countable union of purely one-unrectifiable sets is purely one-unrectifiable.
\item If \(\Sigma\) is purely one-unrectifiable, then \(a\Sigma+b\) is purely one-unrectifiable for every \(a\ne0\) and \(b\in\R^n\). In particular, \(-\Sigma\) is purely one-unrectifiable.
\end{enumerate}
\end{lemma}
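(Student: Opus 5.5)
Each of the three items follows directly from the definition of pure one-unrectifiability together with two basic properties of $\Hh^1$: monotonicity and countable subadditivity. For (3) we also use how $\Hh^1$ scales under similarities. Throughout, fix a bounded interval $I\subset\R$ and a Lipschitz map $g:I\to\R^n$. In each case the goal is to show $\Hh^1(\Sigma\cap g(I))=0$ for the relevant set.

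For (1), monotonicity of $\Hh^1$ gives
\[
\Hh^1(\Sigma\cap g(I))\le \Hh^1(\Sigma)=0.
\]
This holds immediately from the covering definition, since any cover of $\Sigma$ also covers the subset $\Sigma\cap g(I)$.

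For (2), let $\Sigma=\bigcup_{k\ge1}\Sigma_k$ with each $\Sigma_k$ purely one-unrectifiable. Then
\[
\Sigma\cap g(I)=\bigcup_k(\Sigma_k\cap g(I)).
\]
By countable subadditivity of the outer measure $\Hh^1$,
\[
\Hh^1(\Sigma\cap g(I))\le\sum_k\Hh^1(\Sigma_k\cap g(I))=0.
\]
To confirm subadditivity from the definition, fix $\delta>0$. For each $k$ take a $\delta$-cover of $\Sigma_k\cap g(I)$ with total diameter sum less than $\varepsilon 2^{-k}$. The union of these covers is a $\delta$-cover of $\Sigma\cap g(I)$ with sum less than $\varepsilon$.

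For (3), set $T(y)=ay+b$, a bijective similarity with Lipschitz inverse $T^{-1}(z)=(z-b)/a$. We have
\[
(a\Sigma+b)\cap g(I)=T\bigl(\Sigma\cap T^{-1}(g(I))\bigr)=T\bigl(\Sigma\cap \tilde g(I)\bigr),
\]
where $\tilde g=T^{-1}\circ g$. The map $\tilde g$ is Lipschitz on $I$ with constant $\mathrm{Lip}(g)/|a|$. By hypothesis, $\Hh^1(\Sigma\cap\tilde g(I))=0$.

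It remains to check that $T$ maps $\Hh^1$-null sets to $\Hh^1$-null sets. Since $\operatorname{diam} T(U)=|a|\operatorname{diam} U$, a $\delta$-cover $\{U_j\}$ of a set $E$ is carried to an $|a|\delta$-cover $\{T(U_j)\}$ of $T(E)$. This gives
\[
\Hh^1_{|a|\delta}(T(E))\le |a|\,\Hh^1_\delta(E),
\]
and letting $\delta\downarrow0$ yields $\Hh^1(T(E))\le|a|\Hh^1(E)=0$. The special case $-\Sigma$ corresponds to $a=-1$, $b=0$.

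There is no genuine obstacle in any of the three parts. The only point requiring care is in (3): the curve has to be transferred correctly. One pulls back $g$ by $T^{-1}$, so the hypothesis on $\Sigma$ applies to $\tilde g(I)$, and then uses the scaling of $\Hh^1$ under $T$ to push the null set forward.
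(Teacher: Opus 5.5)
Your proposal is correct and follows the paper's argument: monotonicity for (1), countable subadditivity for (2), and for (3) pulling the curve back by the affine map via $T^{-1}\circ g$ and then using how $\Hh^1$ scales under $T$. The only difference is that you prove just the inequality $\Hh^1(T(E))\le|a|\,\Hh^1(E)$ rather than the equality the paper writes, and that inequality is all the argument needs.
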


\begin{proof}
The first statement is immediate. For the second, if \(\Sigma=\bigcup_j\Sigma_j\) and \(g\) is Lipschitz, then
\[
 \Hh^1(\Sigma\cap g(I))\le\sum_j\Hh^1(\Sigma_j\cap g(I))=0.
\]
For the third, if \(S(x)=ax+b\), then \(S^{-1}\circ g\) is Lipschitz and
\[
 \Hh^1\bigl((a\Sigma+b)\cap g(I)\bigr)=|a|\Hh^1\bigl(\Sigma\cap S^{-1}(g(I))\bigr)=0.
\]
\end{proof}
\begin{remark} \label{remark-oct-7}
It follows from Lemma \ref{lem:stability-sigma}-\textup{(i)} that if \(\dim_{\Hh}\Sigma<1\), then \(\Hh^1(\Sigma)=0\), hence \(\Sigma\) is purely one-unrectifiable. On the other hand, note also that the class of purely one-unrectifiable sets is strictly larger. Indeed, in \(\R^2\), let  \(\mathcal K\) be the four-corner Cantor set defined by
\begin{equation} \label{four-corner-Cantor}
 \mathcal K=C_{1/4}\times C_{1/4} \quad \text{where} \quad C_{1/4}=\Big\{\sum_{j=1}^\infty a_j4^{-j}:a_j\in\{0,3\}\Big\}.
\end{equation}
As indicated in \cite[Example 15.2 and the discussion following
Definition 15.3, pp.~203-204]{Mattila}, \(\Hh^1(\mathcal K) \in (0, \infty)\), and \(\mathcal K\) is purely one-unrectifiable. Thus pure one-unrectifiability is not a small Hausdorff-dimension assumption.
\end{remark}
\subsection{One-dimensional area formula and a contact-gradient exclusion lemma}
We recall the following form of the one-dimensional area formula that is needed in the paper. Let $I \subset \R$ be an interval, \(g:I\to\R^n\) be Lipschitz, and \(E\subset I\) be measurable. It follows from area formula (see \cite[Theorem 3.8, p.119]{Evans} or \cite[Theorem 3.2.3, p. 243]{Federer}) that
\begin{equation} \label{one-dim-area}
 \int_E |g'(s)|\,ds=\int_{g(E)}N(g,E,z)\,d\Hh^1(z),
\end{equation}
where \(N(g,E,z)=\#\{s\in E:g(s)=z\}\). In particular, if \(\Hh^1(g(E))=0\), then \(g'=0\) almost everywhere on \(E\). 

\smallskip
For a measurable set \(E\subset\R^n\), we say that a map \(G:E\to\R^n\) is differentiable relative to \(E\) at \(x\in E\) if there is an $n\times n$ derivative matrix \(A_x\) such that
\[
 G(y)=G(x)+A_x(y-x)+o(|y-x|)\qquad\text{as }y\to x,\ y\in E.
\]
We now state and prove the following lemma on the contact-gradient exclusion in purely one-unrectifiable set.
\begin{lemma}\label{lem:contact-gradient}
Let \(E\subset\R^n\) be measurable,  \(\Sigma\subset\R^n\) be closed and purely one-unrectifiable, and let \(G:E\to\R^n\) be Lipschitz. Suppose that \(G\) is differentiable relative to \(E\) at almost every point of
\[
 E_\Sigma=\{x\in E:G(x)\in\Sigma\}
\]
and the derivative \(A_x\) of $G$ satisfies \(\operatorname{rank}A_x\ge1\) almost everywhere \(x \in E_\Sigma\). Then \(|E_\Sigma|=0\).
\end{lemma}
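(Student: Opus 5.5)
We argue by contradiction: assume \(|E_\Sigma|>0\). First we extend \(G\) by Kirszbraun's theorem (or componentwise McShane extension, at the cost of a dimensional constant) to a Lipschitz map \(\tilde G:\R^n\to\R^n\). Since \(\Sigma\) is closed and \(G\) is continuous on \(E\), the set \(E_\Sigma\) is measurable. By Rademacher's theorem \(\tilde G\) is differentiable a.e. At a Lebesgue density point \(x\in E_\Sigma\) where \(G\) is differentiable relative to \(E\) and \(\tilde G\) is differentiable, the two derivatives coincide. Indeed, density gives points of \(E_\Sigma\) in every direction at comparable scales, so the linear part is determined by the values on \(E_\Sigma\), and \(D\tilde G(x)=A_x\). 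Hence for a.e. \(x\in E_\Sigma\) we have \(\operatorname{rank}D\tilde G(x)\ge1\), so some directional derivative \(\partial_{e}\tilde G(x)\ne0\).

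Next we localize the direction. Fix a countable dense set of unit vectors. The set \(E_\Sigma\) is, up to a null set, the union over \(k\) of the sets \(E_k=\{x\in E_\Sigma:\partial_{v_k}\tilde G(x)\ne0\}\); this is because \(D\tilde G(x)\ne0\) forces \(D\tilde G(x)v\ne0\) for \(v\) in an open set of directions. Thus some \(E_k\) has positive measure. Rotating coordinates, we may take \(v_k=e_1\), so that \(\partial_1\tilde G\ne0\) on a set \(E'\subset E_\Sigma\) of positive measure.

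Now we apply Fubini's theorem. Write \(x=(s,y)\) with \(s\in\R\) and \(y\in\R^{n-1}\). Since \(|E'|>0\), there is a set of \(y\) of positive \((n-1)\)-measure for which the slice \(E'_y=\{s:(s,y)\in E'\}\) has positive one-dimensional measure. Fix such a \(y\), and restrict to a bounded interval \(I\) with \(|E'_y\cap I|>0\). Set \(g(s)=\tilde G(s,y)\) on \(I\); this map is Lipschitz. For a.e. \(y\), the derivative \(\partial_1\tilde G(s,y)\) exists and equals \(g'(s)\) at a.e. \(s\). This holds because \(\partial_1\tilde G\) exists a.e. and coincides with the classical derivative of the absolutely continuous slice, so we simply choose \(y\) outside the exceptional null set. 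On \(E'_y\cap I\) we then have \(g(s)\in\Sigma\), so \(g(E'_y\cap I)\subset\Sigma\cap g(I)\), which is \(\Hh^1\)-null by pure one-unrectifiability. The area formula \eqref{one-dim-area} then gives \(g'=0\) a.e. on \(E'_y\cap I\). This contradicts \(g'(s)=\partial_1\tilde G(s,y)\ne0\) on a set of positive length. Hence \(|E_\Sigma|=0\).

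The main obstacle is the bookkeeping in the first step: identifying the relative derivative \(A_x\) with \(D\tilde G(x)\). This is where density points are essential, since a relative derivative on a thin set is not unique. Our plan is to show that for a density point \(x\) and any unit \(v\), there exist points \(y_j\in E_\Sigma\) with \(y_j-x=t_j(v+o(1))\) and \(t_j\to0\). Comparing the two first-order expansions along \(y_j\) then gives \(A_xv=D\tilde G(x)v\). A secondary point is ensuring measurability of the sets \(E_k\); this follows because \(\partial_{v}\tilde G\) are Borel functions defined a.e.
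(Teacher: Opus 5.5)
Your proposal is correct and follows the paper's proof essentially step for step. Both arguments use a Kirszbraun extension with Rademacher's theorem, identify $A_x$ with $D\widetilde G(x)$ at density points via nearby points of $E_\Sigma$ in every direction, and find a positive-measure set on which some directional derivative is nonzero. Both then slice with Fubini and apply the one-dimensional area formula against pure one-unrectifiability. The remaining differences are cosmetic: you use a countable dense set of directions instead of the coordinate directions $e_i$, and the condition $\partial_1\widetilde G\neq 0$ instead of the quantitative bound $|D\widetilde G\,e_i|\ge k^{-1}$.
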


\begin{proof} We assume by contradiction that \(|E_\Sigma|>0\). From the continuity of the Lebesgue measure, and replacing \(E_\Sigma\) by its intersection with a sufficiently large ball, we may assume that \(E_\Sigma\)  is bounded and still has positive measure. Using Kirszbraun's theorem (see \cite{Kirszbraun, Valentine}), we can extend the function \(G\) Lipschitzly to \(\R^n\), and denote the resulting map by \(\widetilde G\). By Rademacher's theorem (see \cite{Rademacher} and \cite[Theorem 3.2, p. 103]{Evans}), \(\widetilde G\) is differentiable almost everywhere. At almost every Lebesgue density point \(x\) of \(E_\Sigma\) where the relative derivative \(A_x\) and \(D\widetilde G(x)\) both exist, one has
\[
 D\widetilde G(x)=A_x.
\]
To prove this claim, let us fix \(e\in\mathbb S^{n-1}\) and we prove that \(D\widetilde G(x)e=A_xe\). Since \(E_\Sigma\) has density one at \(x\), we first claim that
\begin{equation} \label{claim-oct-5}
 \operatorname{dist}(x+he,E_\Sigma)=o(|h|)\qquad\text{as }h\to0.
\end{equation}
Indeed, if \eqref{claim-oct-5} fails, there are $\delta>0$ and $h_k\to0$ when $k \rightarrow \infty$ such that
\[
 \operatorname{dist}(x+h_ke,E_\Sigma)\geq\delta|h_k|, \quad \forall \ k \in \mathbb{N}.
\]
Hence
\[
 B_{\delta|h_k|}(x+h_ke)
 \subset B_{(1+\delta)|h_k|}(x)\setminus E_\Sigma,
\]
and therefore
\[
 \frac{|B_{(1+\delta)|h_k|}(x)\setminus E_\Sigma|}
      {|B_{(1+\delta)|h_k|}|}
 \geq \left(\frac{\delta}{1+\delta}\right)^n.
\]
This contradicts the density-one property of $E_\Sigma$ at $x$.


\smallskip
Now, due to \eqref{claim-oct-5}, we can choose $y_h\in E_\Sigma$ such that
\[
 |y_h-(x+he)|=o(|h|).
\]
As $\widetilde G=G$ on $E_\Sigma$ and $\widetilde G$ is Lipschitz, it follows that
\[
 \widetilde G(x+he)-\widetilde G(x)
 =G(y_h)-G(x)+O(|x+he-y_h|)
 =G(y_h)-G(x)+o(|h|).
\]
Also, the differentiable relative expansion of $G$ at $x$ gives
\[
 G(y_h)-G(x)=A_x(y_h-x)+o(|y_h-x|).
\]
As $y_h-x=he+o(|h|)$, we obtain
\[
 \widetilde G(x+he)-\widetilde G(x)
 =hA_xe+o(|h|).
\]
Hence \(D\widetilde G(x)e=A_xe\), and therefore \(D\widetilde G(x)=A_x\).

\smallskip
Since $\operatorname{rank}D\widetilde G(x)\geq1$ for almost every
$x\in E_\Sigma$, at least one column of $D\widetilde G(x)$ is nonzero.
Consequently, up to a null set,
\[
 E_\Sigma
 =\bigcup_{i=1}^{n}\bigcup_{k=1}^{\infty}
 \{x\in E_\Sigma:|D\widetilde G(x)e_i|\geq k^{-1}\}.
\]
As $|E_\Sigma|>0$, countable subadditivity yields indices
$i\in\{1,\ldots,n\}$ and $k\geq1$ for which
\[
 \mathcal{F}=\{x\in E_\Sigma:|D\widetilde G(x)e_i|\geq k^{-1}\}
\]
has positive measure. Now, for each $x \in \R^n$, write $x=z+se_i$, where $z\in e_i^\perp$ and $s\in\mathbb R$. Since
this change of coordinates has Jacobian one, it follows from Tonelli's theorem that
\[
 0<|F|
 =\int_{e_i^\perp}\int_{\mathbb R}
    \mathbf 1_F(z+se_i)\,ds\,dz
 =\int_{e_i^\perp}|F_z|\,dz,
\]
where $dz$ denotes Lebesgue measure on $e_i^\perp$ and $F_z=\{s\in\mathbb R:z+se_i\in \mathcal{F}\}$. For $n=1$, this is the counting measure on $e_i^\perp=\{0\}$. Hence $|F_z|>0$ for some $z\in e_i^\perp$. We then let \(g(s)=\widetilde G(z+se_i)\). Since \(g\) is Lipschitz and, for almost every \(s\in F_z\),
\begin{equation} \label{g-deriv-oct-5}
 |g'(s)|=|D\widetilde G(z+se_i)e_i|\ge k^{-1}.
\end{equation}
On the other hand, \(g(F_z)\subset\Sigma\cap g(I)\) for a bounded interval \(I\) containing \(F_z\). Since \(\Sigma\) is purely one-unrectifiable, \(\Hh^1(g(F_z))=0\). From this, and the one-dimensional area formula \eqref{one-dim-area}, we see that \(g'=0\) almost everywhere on \(F_z\), and we obtain a contradiction to \eqref{g-deriv-oct-5}. The proof is then complete.
\end{proof}

\subsection{Semi-concave functions} \label{semi-concave}

We record the properties of semi-concave functions used below. Here semi-concavity means local semi-concavity with a quadratic modulus: on each ball compactly contained in the domain, $u(z)-K|z|^2/2$ is concave for some $K\ge0$. We denote by $D^+u(x)$ the superdifferential, that is, the set of $p\in\R^n$ such that
\[
 u(x+h)\le u(x)+p\cdot h+o(|h|)\qquad\text{as }h\to0.
\]
The
second-order expansion is understood as in
\cite[Definition~1.4, p.~9]{CaffarelliCabre}; see also
\cite[Theorem~1.5, p.~9]{CaffarelliCabre} for its almost-everywhere
existence for convex functions. The following known results are needed in the paper, we also provide the proof for completeness.
\begin{lemma}\label{lem:semiconcave-properties}
Let \(u\) be semi-concave in an open set \(\Omega\subset\R^n\).
Then \(u\) is locally Lipschitz and has a second-order expansion almost
everywhere in \(\Omega\). Suppose that, at \(x\in\Omega\),
\begin{equation}\label{eq:semiconcave-expansion}
 u(x+h)=u(x)+p\cdot h+\frac12h^TAh+o(|h|^2),
 \qquad A\in\mathbb S^n.
\end{equation}
Then \(p=Du(x)\), and for every sequence \(x_j\to x\) and every choice
\(p_j\in D^+u(x_j)\),
\begin{equation}\label{eq:supergradient-expansion}
 p_j=p+A(x_j-x)+o(|x_j-x|).
\end{equation}
In addition, if \(\phi\in C^1\) touches \(u\) from below at \(x\), then \(u\) is
differentiable at \(x\) and \(Du(x)=D\phi(x)\). Moreover, suppose that \eqref{eq:semiconcave-expansion} holds at \(x\),
that \(u\) sloves 
\[
 \Mminus(D^2u,Du)\le c\quad\text{in }\Omega,
\]
in the viscosity sense, and \(|p|>\gamma\), \(p\notin\Sigma\). Then
\begin{equation}\label{eq:semiconcave-classical-inequality}
 \Pm(A)-\mu_0|p|\le c.
\end{equation}
\end{lemma}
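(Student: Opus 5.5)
We reduce to the concave case: on a ball $B\Subset\Omega$, the function $v(z)=u(z)-K|z|^2/2$ is concave. Concave functions are locally Lipschitz, and by \cite[Theorem~1.5]{CaffarelliCabre} (applied to $-v$) they have a second-order expansion almost everywhere. Adding the smooth term $K|z|^2/2$ preserves both properties, so $u$ inherits them. The superdifferential also transfers: $D^+u(y)=D^+v(y)+Ky$. Concavity gives the global inequality $v(y+h)\le v(y)+q\cdot h$ for every $q\in D^+v(y)$, so
\[
 u(y+h)\le u(y)+p\cdot h+\tfrac K2|h|^2,\qquad p\in D^+u(y).
\]
If \eqref{eq:semiconcave-expansion} holds at $x$, it gives first-order differentiability, so $p=Du(x)$.

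\smallskip
For \eqref{eq:supergradient-expansion} we use a two-point comparison. Put $r_j=|x_j-x|$, fix a unit vector $e$ and a small $t>0$, and set $h=tr_je$. The global upper bound at $x_j$ gives
\[
 u(x_j+h)-u(x_j)\le p_j\cdot h+\tfrac K2 t^2r_j^2 .
\]
We expand both $u(x_j+h)$ and $u(x_j)$ using \eqref{eq:semiconcave-expansion} about $x$; both points lie within distance $(1+t)r_j$ of $x$. This yields
\[
 \bigl(p+A(x_j-x)\bigr)\cdot h+\tfrac12h^TAh+o(r_j^2)\le p_j\cdot h+\tfrac K2t^2r_j^2 .
\]
Dividing by $tr_j$ gives
\[
 \bigl(p+A(x_j-x)-p_j\bigr)\cdot e\le C t r_j+\frac{o(r_j^2)}{t r_j}.
\]
Taking the supremum over $e$, and then choosing $t=t_j\to0$ slowly enough that $o(r_j^2)/(t_jr_j^2)\to0$, we get $|p_j-p-A(x_j-x)|=o(r_j)$. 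This step needs the most care: the choice of $t$ must balance the quadratic error against the $o(r_j^2)$ remainder. Here the uniformity of the $o(\cdot)$ term in \eqref{eq:semiconcave-expansion} is exactly what is needed.

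\smallskip
For the touching statement, suppose $\phi\in C^1$ touches $u$ from below at $x$. Then for any $q\in D^+u(x)$, which is nonempty by concavity of $v$,
\[
 \phi(x+h)-\phi(x)\le u(x+h)-u(x)\le q\cdot h+\tfrac K2|h|^2 .
\]
Hence $D\phi(x)\cdot h\le q\cdot h+o(|h|)$ for all $h$, so $q=D\phi(x)$. Thus $D^+u(x)$ is a singleton. Combined with the two-sided bounds $\phi(x+h)-\phi(x)\le u(x+h)-u(x)\le q\cdot h+O(|h|^2)$, this shows $u$ is differentiable at $x$ with $Du(x)=D\phi(x)$.

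\smallskip
For \eqref{eq:semiconcave-classical-inequality}, fix $\varepsilon>0$ and take the quadratic test function
\[
 \phi(y)=u(x)+p\cdot(y-x)+\tfrac12(y-x)^T(A-\varepsilon I)(y-x).
\]
By \eqref{eq:semiconcave-expansion}, $\phi$ touches $u$ strictly from below at $x$ in a small ball. Since $D\phi(x)=p$ satisfies $|p|>\gamma$ and $p\notin\Sigma$, the viscosity inequality $\Mminus(D^2u,Du)\le c$ applies and gives $\Pm(A-\varepsilon I)-\mu_0|p|\le c$. The Pucci operator is continuous, so letting $\varepsilon\to0$ yields $\Pm(A)-\mu_0|p|\le c$.
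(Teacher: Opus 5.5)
Your proposal is correct and follows essentially the same route as the paper: the semi-concavity upper bound at $x_j$ compared with the second-order expansion at $x$ at step size $t r_j$; the squeeze $\phi(x+h)-\phi(x)\le u(x+h)-u(x)\le q\cdot h+\frac K2|h|^2$ for the touching claim; and the quadratic test function $P(z)-\eta|z-x|^2$ for the Pucci inequality. The differences are cosmetic: you use a one-sided bound over all unit $e$ with $t_j\to0$ chosen along the sequence, where the paper uses a two-sided bound with fixed $\delta$ and then lets $\delta\downarrow0$. You should also add the one-line remark that indices with $x_j=x$ are trivial, because $D^+u(x)=\{p\}$ at a point of differentiability.
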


\begin{proof} The assertion that $u$ is locally Lipschitz follows from \cite[Theorem 2.17, p. 33]{CannarsaSinestrari}. On the other hand, on each ball compactly contained in \(\Omega\), there is \(K\ge0\)
such that \(K|z|^2/2-u(z)\) is convex in this ball. Then, the almost-everywhere expansion follow from the Aleksandrov's theorem for convex functions; see
\cite[Theorem~2.3.1(i), p. 42]{CannarsaSinestrari} and \cite[Theorem 6.9, p. 273]{Evans}.

\smallskip
Fix \(x\) satisfying \eqref{eq:semiconcave-expansion}, and choose such a
constant \(K\) such that the function $\psi(z)= K|z|^2/2-u(z)$ is convex on a small ball centered at \(x\). Let us define
\[
 P(z)=u(x)+p\cdot(z-x)+\frac12(z-x)^TA(z-x),
 \qquad R(z)=u(z)-P(z).
\]
Thus \(R(z)=o(|z-x|^2)\). It suffices to consider indices $j$ for which
\(r_j=|x_j-x|>0\), since \(D^+u(x)=\{p\}\). Let \(h_j=x_j-x\), \(0<\delta<1\), \(|e|=1\), and \(t_j=\delta r_j\). For all
such sufficiently large \(j\), it follows from the convexity of $\psi$ that
\[
 u(x_j\pm t_je)
 \le u(x_j)\pm t_jp_j\cdot e+\frac K2t_j^2;
\]
see \cite[Proposition~3.3.1, pp. 56-57]{CannarsaSinestrari}, for example. Hence
\[
 \frac{u(x_j+t_je)-u(x_j)}{t_j}-\frac K2t_j
 \le p_j\cdot e
 \le
 \frac{u(x_j)-u(x_j-t_je)}{t_j}+\frac K2t_j.
\]
Since $u = P +R$, and
\[
 \begin{aligned}
 \frac{P(x_j+t_je)-P(x_j)}{t_j}
   &=p\cdot e+(Ah_j)\cdot e+\frac{t_j}{2}e^TAe,\\
 \frac{P(x_j)-P(x_j-t_je)}{t_j}
   &=p\cdot e+(Ah_j)\cdot e-\frac{t_j}{2}e^TAe,
 \end{aligned}
\]
we obtain
\begin{align*}
 \left|(p_j-p-Ah_j)\cdot e\right|
 & \le \frac{K+\|A\|_{\mathrm{op}}}{2}\delta r_j
      +\frac{2}{\delta r_j}
       \sup_{z\in B_{2r_j}(x)}|R(z)|\\
       & \leq \frac{K+\|A\|_{\mathrm{op}}}{2}\delta r_j
      +\frac{2 o(r_j^2)}{\delta r_j}.
\end{align*}
Taking the supremum over
\(|e|=1\), dividing by \(r_j\), and letting \(j\to\infty\), we obtain
\[
 \limsup_{j\to\infty}\frac{|p_j-p-Ah_j|}{r_j}
 \le \frac{K+\|A\|_{\mathrm{op}}}{2}\delta.
\]
Since \(\delta\) is arbitrary, \eqref{eq:supergradient-expansion}
follows.

\smallskip
Next, suppose that \(\phi\in C^1\) touches \(u\) from below at \(x\).
The convexity of the function\(\psi(z) =K|z|^2/2-u(z)\) near \(x\) gives
\(D^+u(x)\ne\varnothing\). Choose \(q\in D^+u(x)\). For all
sufficiently small \(h\),
\begin{equation} \label{phi-ext-oct-5}
 \phi(x+h)-\phi(x)
 \le u(x+h)-u(x)
 \le q\cdot h+\frac K2|h|^2.
\end{equation}
Applying these inequalities with \(h=te\) and \(h=-te\), divide by \(t\),
and let \(t\downarrow0\), we obtain \(q\cdot e=D\phi(x)\cdot e\) for every
unit vector \(e\). Hence, \(q=D\phi(x)\). Now, we substitute $q=D\phi(x)$ into \eqref{phi-ext-oct-5} and use the first-order extension of $\phi$ at $x$ to infer that
\[
 u(x+h)=u(x)+D\phi(x)\cdot h+o(|h|).
\]
This proves that $u$ is differentiable at $x$ and $Du(x) = D\phi(x)$.

\smallskip
Finally, assume the hypotheses of the last assertion in the lemma. For
\(\eta>0\),  we define
\[
 \psi_\eta(z)=P(z)-\eta|z-x|^2, \quad x \in \Omega.
\]
Since
\[
 u(z)-\psi_\eta(z)=R(z)+\eta|z-x|^2\ge0
\]
for \(z\) sufficiently close to \(x\), the function \(\psi_\eta\)
touches \(u\) from below at \(x\). Also,
\[
 D\psi_\eta(x)=p,\qquad D^2\psi_\eta(x)=A-2\eta I.
\]
The gradient \(p\) satisfies \(|p|>\gamma\) and \(p\notin\Sigma\). Therefore, 
\[
 \Pm(A-2\eta I)-\mu_0|p|\le c.
\]
Then, by letting \(\eta\downarrow0\), we obtain
\eqref{eq:semiconcave-classical-inequality}. The proof is complete.
\end{proof}

\smallskip
We also record the standard inf-convolution reduction. If \(v\) is bounded and lower semi-continuous in \(B_1\), define
\begin{equation} \label{inf-convol}
 v_\varepsilon(x)=\inf_{\xi \in B_1}\left\{v(\xi)+\frac{|x-\xi|^2}{2\varepsilon}\right\}.
\end{equation}
It is knwon that \(v_\varepsilon\) is semi-concave,   and \(v_\varepsilon\uparrow v\) as \(\varepsilon\downarrow0\), locally uniformly if \(v\) is continuous. 
\begin{lemma} \label{regularization-lemma} Assume that a lower semi-continuous function \(v \in L^\infty(B_2)\) satisfies
\begin{equation} \label{v-eqn-oct-7}
 \Mminus(D^2v,Dv)\le c \quad \text{in} \quad B_1,
\end{equation}
in the viscosity sense. Then, in the viscosity sense, \(v_\varepsilon\) satisfies the same inequality in every fixed smaller ball for all sufficiently small \(\varepsilon\) with the same threshold $\gamma$ and the same exceptional set \(\Sigma\). 
\end{lemma}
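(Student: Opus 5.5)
The plan is the standard translation argument for inf-convolutions, with attention to the fact that the test gradient is not changed by the translation. Let $M=\|v\|_{L^\infty(B_2)}$ and fix a smaller ball $B_r$, $r<1$. First I would localize the infimum in \eqref{inf-convol}. Take $x\in B_r$ and any near-minimizer $\xi$ (in fact an exact minimizer $\xi_x\in\overline B_1$ exists by lower semicontinuity, after passing to the closure if necessary). Comparing with the choice $\xi=x$ gives
\[
 \frac{|x-\xi_x|^2}{2\varepsilon}\le v(x)-v(\xi_x)\le 2M .
\]
Hence $|x-\xi_x|\le 2\sqrt{M\varepsilon}$. For $\varepsilon$ small, $2\sqrt{M\varepsilon}<(1-r)/2$, so $\xi_x\in B_{(1+r)/2}\Subset B_1$, and the minimizer is an interior point where \eqref{v-eqn-oct-7} can be tested.

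Next comes the key step. Let $\phi\in C^2$ touch $v_\varepsilon$ from below at $x_0\in B_r$ with $p=D\phi(x_0)\notin\overline B_\gamma\cup\Sigma$. Let $\xi_0=\xi_{x_0}$ and set $\psi(\xi)=\phi(\xi+x_0-\xi_0)-|x_0-\xi_0|^2/(2\varepsilon)$. For $\xi$ near $\xi_0$, the point $x=\xi+x_0-\xi_0$ is near $x_0$, so the definition of $v_\varepsilon$ and the touching give
\[
 v(\xi)\ge v_\varepsilon(x)-\frac{|x-\xi|^2}{2\varepsilon}\ge \phi(x)-\frac{|x_0-\xi_0|^2}{2\varepsilon}=\psi(\xi).
\]
Equality holds at $\xi=\xi_0$, because $v(\xi_0)+|x_0-\xi_0|^2/(2\varepsilon)=v_\varepsilon(x_0)=\phi(x_0)$. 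Thus $\psi$ touches $v$ from below at the interior point $\xi_0$. Moreover $D\psi(\xi_0)=D\phi(x_0)=p$ and $D^2\psi(\xi_0)=D^2\phi(x_0)$.

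Since the gradient is exactly $p$, it avoids $\overline B_\gamma\cup\Sigma$ with the same $\gamma$ and $\Sigma$. The viscosity inequality for $v$ at $\xi_0$ therefore yields $\Pm(D^2\phi(x_0))-\mu_0|p|\le c$, which is the desired inequality for $v_\varepsilon$ at $x_0$. If instead $p\in\overline B_\gamma\cup\Sigma$, the inequality $\Mminus=-\infty\le c$ holds trivially.

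The main obstacle is only the bookkeeping that ensures the minimizer exists and stays inside $B_1$. If $v$ is merely lower semicontinuous on $B_2$, the infimum over $B_1$ may only be approached near $\partial B_1$. The bound $|x-\xi|\le2\sqrt{M\varepsilon}$ for every near-minimizer, together with lower semicontinuity on the compact set $\overline B_{(1+r)/2}$, settles this. No property of $\Sigma$ is needed, because translation preserves gradients exactly; this is why the threshold and the exceptional set are unchanged.
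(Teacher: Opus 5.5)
Your proposal is correct and is essentially the paper's argument. You bound $|x_0-\xi_0|^2\le 2\varepsilon(\sup v-\inf v)$ to keep the minimizer of the inf-convolution inside $B_1$, then translate the test function so it touches $v$ at $\xi_0$ with the same gradient and Hessian, so the same threshold $\gamma$ and exceptional set $\Sigma$ apply.

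Your treatment of the existence of an interior minimizer, via near-minimizers and lower semicontinuity on a compact sub-ball, is more careful than the paper's, which simply asserts that a minimizer $y\in B_1$ exists. Read $M$ as a pointwise bound $\sup_{B_2}|v|$ rather than an essential supremum, as the paper implicitly does.
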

\begin{proof} For some some fixed sufficiently small   $\eta \in (0,1)$, let $\varepsilon \in (0,1)$ sufficiently small so that
\[
\varepsilon[\underset{B_1} \sup v - \underset{B_1} \inf v] \leq \frac{\eta^2}{8}.
\]
For every $x \in B_{1-\eta}$, let \(\phi \in C^2(B_\rho(x))\) with some small $\rho \in (0,\eta/2)$ that touches \(v_\varepsilon\) from below at $x$  with $D\phi(x) \not\in (\overline{B}_\gamma \cup \Sigma)$. Let  \(y \in B_1\) realizes the infimum in \eqref{inf-convol} at \(x\), i.e.
\[
v_\varepsilon(x) = v(y) + \frac{|x-y|^2}{2\varepsilon}.
\]
It is not hard to verify that $y \in \overline{B}_{1-\frac{\eta}{2}}$.
Then let the $C^2$ map $\psi : B_{\rho}(y) \rightarrow \R$ be defined by
\[
 z\longmapsto \psi(z) = \phi(x+z-y)-\frac{|x-y|^2}{2\varepsilon}.
\]
It follows that $\psi$ touches \(v\) from below at \(y\). Moreover,
\[
D\psi(y) = D\phi(x) \quad \text{and} \quad D^2\psi(y) = D^2\phi(x).
\]
Then $D\psi(y) \not\in (\overline{B}_\gamma \cup \Sigma)$, and it follows from \eqref{v-eqn-oct-7} that
\[
 \Mminus(D^2\phi(x),D\phi(x))=  \Mminus(D^2\psi(y),D\psi(y))\le c.
\]
Hence, we have verified that $v_\epsilon$ is the viscosity solution of
\[
\Mminus(D^2 v_\varepsilon,Dv_\varepsilon) \leq c \quad \text{in} \quad B_{1-\eta}
\]
and the proof is complete.
\end{proof}
\section{The cusp measure estimate} \label{Cusp-sec}
The following lemma is the main measure estimate. Its proof follows the cusp construction of \cite{ImbertSilvestre}. The new point is the elimination of the exceptional gradients in \(\Sigma\).

\begin{lemma}\label{lem:measure} For every given \(0< \lambda <\Lambda\) and \(\mu_0 \geq 0\), there are constants \(\gamma_0>0\), \(\delta_0\in(0,1)\), and \(M_0>1\) such that the following holds. Let \(\Sigma\) be a closed purely one-unrectifiable set, \(0\le\gamma\le\gamma_0\), and \(u:B_1\to\R\) be lower semi-continuous satisfying
\begin{equation} \label{M-eqn-Oct-3}
 \Mminus(D^2u,Du)\le1 \quad \text{in} \quad B_1,
\end{equation}
in the viscosity sense. Assume that 
\begin{equation} \label{hyp-oct-3}
 u\ge0, \quad \text{and} \qquad |\{u>M_0\}\cap B_1|>(1-\delta)|B_1|
\end{equation}
for some $\delta \in (0, \delta_0]$. Then \(u>1\) in \(B_{1/4}\).
\end{lemma}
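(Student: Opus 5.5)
We argue by contradiction, following the cusp construction of \cite{ImbertSilvestre}. Suppose \(u(x_0)\le1\) for some \(x_0\in B_{1/4}\). By Lemma~\ref{regularization-lemma} we may replace \(u\) by its inf-convolution \(u_\varepsilon\). This function is semi-concave, still satisfies \eqref{M-eqn-Oct-3} on a slightly smaller ball with the same \(\gamma\) and \(\Sigma\), satisfies \(u_\varepsilon\le u\), and has \(|\{u_\varepsilon>M_0\}\cap B_{1-\eta}|>(1-2\delta)|B_1|\) for small \(\varepsilon\), since \(u_\varepsilon\uparrow u\). So it suffices to treat semi-concave \(u\) with \(u(x_0)\le 1\).

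For each vertex \(y\) in a small ball \(B_r\) with \(r\) a fixed small constant (say \(r=1/16\)), consider the cusp \(\phi_y(x)=-10|x-y|^{1/2}\) and slide it from below until it touches \(u\). Since \(u(x_0)\le1\), \(u\ge0\), and the cusp drops by a definite amount away from \(y\), the minimum of \(u-\phi_y\) over \(\overline B_{1-\eta}\) is attained at an interior point \(x\in B_{3/4}\) with \(u(x)\le C\). At such a contact point Lemma~\ref{lem:semiconcave-properties} shows that \(u\) is differentiable with \(Du(x)=D\phi_y(x)\). Moreover \(|D\phi_y(x)|=5|x-y|^{-1/2}\ge 5\), so \(|Du(x)|>\gamma\) once \(\gamma_0<5\). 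The map \(x\mapsto y\) is inverted by \(y=x-(5/|Du(x)|)^2\,Du(x)/|Du(x)|\); call this map \(T\), so that \(y=T(x,Du(x))\). Let \(E\) be the contact set, i.e.\ the set of \(x\) that are contact points for some vertex \(y\in B_r\). Then \(E\) is closed and \(T(E)\supset B_r\). On \(E\) the function \(u\) is touched from below by a paraboloid-type function with uniformly bounded curvature from below, because \(|x-y|\) is bounded below. Together with semi-concavity this makes \(Du\) Lipschitz on \(E\), and by \eqref{eq:supergradient-expansion} it is differentiable relative to \(E\) a.e. with derivative \(A_x=D^2u(x)\) and \(D^2\phi_y(x)\le A_x\le K I\).

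\textbf{The key new step} is excluding \(\Sigma\). The radial eigenvalue of \(D^2\phi_y\) is \(\tfrac52|x-y|^{-3/2}>0\). Hence \(e^TA_xe>0\) for the radial unit vector \(e\), so \(\operatorname{rank}A_x\ge1\) at a.e.\ point of \(E\). Applying Lemma~\ref{lem:contact-gradient} with \(G=Du|_E\) gives \(|\{x\in E: Du(x)\in\Sigma\}|=0\). Thus at a.e.\ \(x\in E\) the gradient is active, and the last part of Lemma~\ref{lem:semiconcave-properties} gives \(\Pm(A_x)-\mu_0|Du(x)|\le1\). Since \(A_x\ge D^2\phi_y(x)\) and \(A_x\) is bounded above by \(K\), this inequality bounds all eigenvalues of \(A_x-D^2\phi_y(x)\). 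The bound is in terms of \(|x-y|^{-3/2}\) and \(|Du(x)|\lesssim|x-y|^{-1/2}\), with \(|x-y|\) bounded below on the relevant set. We check this exactly as in \cite{ImbertSilvestre}, whose computation does not depend on \(\Sigma\).

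Next we apply the area formula to the Lipschitz map \(x\mapsto T(x,Du(x))\) on \(E\). Its Jacobian is bounded by \(C\) a.e.\ on \(E\), so \(|B_r|\le\int_E |\det DT|\le C|E\cap B_{3/4}|\). Moreover \(E\subset\{u\le C_1\}\), because at a contact point \(u(x)=\phi_y(x)+\min(u-\phi_y)\le C_1\). Choosing \(M_0>C_1\) gives \(|E|\le|\{u\le M_0\}\cap B_1|<\delta|B_1|\), which contradicts \(|E|\ge c|B_r|\) once \(\delta_0\) is small.

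\textbf{The main obstacle} is making the Jacobian estimate rigorous on the contact set, specifically that \(T\circ(\mathrm{id},Du)\) is Lipschitz on \(E\). We will handle this by restricting to a compact piece where \(|x-y|\ge c\). If that restriction fails, the fallback is to use the a.e.\ relative differentiability from Lemma~\ref{lem:semiconcave-properties} together with a Kirszbraun extension, as in the proof of Lemma~\ref{lem:contact-gradient}.
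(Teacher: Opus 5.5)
\textbf{Gap: the cusp can touch at its own vertex.} Your outline is the paper's: sliding cusps, excluding $\Sigma$ through the positive radial eigenvalue and Lemma~\ref{lem:contact-gradient}, the area formula, and $E\subset\{u\le C_1\}$. But letting the vertex $y$ range over all of $B_r$ makes the step $T(E)\supset B_r$ fail. Nothing in your argument prevents $z\mapsto u(z)+10|z-y|^{1/2}$ from attaining its minimum at $z=y$. For locally Lipschitz $u$ the vertex is always a local minimizer, and it is a global one whenever, say, $u(y)=0$. At that stage your only information is $u\ge0$, $u(x_0)\le1$ and the measure condition, which only bounds $|\{u\le M_0\}|$; this does not exclude such $y$ on a small subset of $B_r$. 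At such a contact $\phi_y$ is not differentiable, so neither $Du(x)=D\phi_y(x)$ nor your inversion formula makes sense, and these vertices are missed by $T$.

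This is why the paper puts the vertices in a compact set $U\subset\{u>M_0\}\cap B_{1/4}$ with $|U|\ge|B_{1/4}|-2\delta|B_1|$. Every contact value is at most $M_0-1<u(\text{vertex})$, so the contact point is never the vertex, and the comparison becomes $|U|\le C_0^n|T|\le C_0^n\delta|B_1|$. In your setup the same repair reads as follows. A vertex all of whose contacts are at itself has $u(y)\le C_1<M_0$. So if $E'$ denotes the non-vertex contacts, then $|B_r\setminus T(E')|\le|\{u\le M_0\}\cap B_r|\le2\delta|B_1|$, hence $|B_r|\le C|E'|+2\delta|B_1|$.

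\textbf{Your main obstacle.} Once vertex contacts are discarded it disappears, provided you allow the separation to depend on $u$. At a non-vertex contact, $5|x-y|^{-1/2}=|Du(x)|$ is at most the Lipschitz constant $L$ of $u_\varepsilon$ on $\overline B_{3/4}$. (The paper instead combines $u(\text{vertex})-u(x)\ge1$ with $L$.) Either way $|x-y|\ge\eta(u)>0$. This bound is only qualitative: it makes $E'$ closed and makes $Du|_{E'}$ and the vertex map Lipschitz, so Lemma~\ref{lem:contact-gradient} and the area formula (after a Kirszbraun extension) apply.

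The quantitative Jacobian bound must not use it. Differentiate $Du(x)=D\phi(x-T(x))$, with $\phi(z)=-10|z|^{1/2}$, relative to $E'$ at density points; this gives $DT=I-(D^2\phi(x-y))^{-1}A_x$. With $\|(D^2\phi)^{-1}\|\le Cr^{3/2}$ and $|A_x|\le C(1+r^{-3/2})$ from the Pucci inequality, you get $|DT|\le C_0$ using only $r=|x-y|\le1$. If, as your text suggests, a lower bound on $|x-y|$ enters the eigenvalue estimate, then $\delta_0$ depends on $u$ through $\varepsilon$.

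\textbf{Smaller corrections.} Lemma~\ref{regularization-lemma} is stated for bounded functions, so truncate to $\min\{u,2M_0\}$ before regularizing. The inverse map is $y=x+25\,Du(x)/|Du(x)|^3$; your sign is reversed.
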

\begin{proof}  We set
\[
M_0=2+5\sqrt2, \quad \gamma_0 = \sqrt{5}, \quad \text{and} \quad \delta_0 = \delta_0(n, \lambda, \Lambda, \mu_0) \in (0,1)
\]
where the number $\delta_0(n, \lambda, \Lambda, \mu_0)$ will be determined. We split the proof into two steps. In the first step, we prove the result with an additional assumption that $u$ is a semi-concave function. Then, in the second step, we remove such an additional assumption and prove the lemma.

\medskip
\noindent 
{\bf Step 1}: We prove Lemma~\ref{lem:measure} with an additional assumption that \(u\) is semi-concave. Assume that \eqref{hyp-oct-3} holds with some $\delta \in (0, \bar{\delta}_0]$ for some universal constant $\bar{\delta}_0 \in (0,1)$ depending only on $n, \lambda, \Lambda$ and $\mu_0$  that will be determined, and assume also that \(u\) is semi-concave. We prove the assertion of the lemma by a contradiction argument. Let
\[
\mathcal U=\{u>M_0\}\cap B_{1/4}, \quad \text{and} \quad \phi(z)=-10|z|^{1/2}, \quad z \in \R^n.
\]
By the inner regularity of Lebesgue measure, we can choose a compact set \(U\subset\mathcal U\) such that \(|\mathcal U\setminus U|<\delta|B_1|\). 

\smallskip
We assume by contradiction that there is some \(x_0\in B_{1/4}\) such that \(u(x_0)\le1\).  For every \(x\in U\), we minimize 
\[
 u(y)-\phi(y-x)=u(y)+10|y-x|^{1/2}
\]
over \(y\in B_1\). Note that as \(u\ge0\) and $x \in U \subset B_{1/4}$, every sequence \(z_j\in B_1\) with \(|z_j|\to1\) satisfies
\[
 \liminf_{j\to\infty}\bigl(u(z_j)+10|z_j-x|^{1/2}\bigr)\ge5\sqrt3>1+5\sqrt2\ge u(x_0)+10|x_0-x|^{1/2}.
\]
Thus a minimizing sequence stays in a compact subset of \(B_1\), and the lower semi-continuity of $u$ gives an existence of a minimizer \(y\in B_1\). In fact, choose \(\sigma_0\in(0,1/4)\) so small that
\[
10(3/4-\sigma_0)^{1/2}>1+5\sqrt2.
\]
If \(|y|\ge1-\sigma_0\), then as \(u(y)\ge0\) and \(|y-x|\ge3/4-\sigma_0\), we have
\[u(y)+10|y-x|^{1/2}> u(x_0)+10|x_0-x|^{1/2}\]
contradicting to the minimizing. Thus every minimizer belongs to \(B_{1-\sigma_0}\). Moreover, if $y$ is a minimizer, we have
\[
 u(y)\le1+5\sqrt2=M_0-1 < u(x),
\]
and therefore \(y\ne x\). Moreover, the function \(z\mapsto u(y)+\phi(z-x)-\phi(y-x)\) touches \(u\)
from below at \(y\); since \(y\ne x\), Lemma~\ref{lem:semiconcave-properties}
gives \(Du(y)=D\phi(y-x)\). In summary, for every $x \in U$ at the
corresponding contact point $y$ satisfies
\begin{equation} \label{conclusion-oct-2}
\left\{
\begin{array}{ll}
& y \in B_{1-\sigma_0}, \quad y\ne x, \quad u(y) \leq M_0-1, \quad Du(y)=D\phi(y-x),\\
& u(z)\ge u(y)+\phi(z-x)-\phi(y-x) \quad \text{for all } \ z \in B_1.
\end{array} \right.
\end{equation}
Let \(T\) be the set of all such contact points.  We note that $T$ is closed. Indeed, let $y_j\in T$ with $y_j\to y$. Choose $x_j\in U$ such that
$y_j$ minimizes $u(z)+10|z-x_j|^{1/2}$ over $z\in B_1$. Since $U$
is compact, after passing to a subsequence we have $x_j\to x\in U$.
For every $z\in B_1$,
\[
 u(y_j)+10|y_j-x_j|^{1/2}
 \leq u(z)+10|z-x_j|^{1/2}.
\]
By lower semicontinuity of $u$,
\[
 u(y)+10|y-x|^{1/2}
 \leq u(z)+10|z-x|^{1/2}.
\]
Thus $y\in T$, and $T$ is closed.

\smallskip
Now, since \(U\) is compact, \(T \subset \overline B_{1-\sigma_0}\) and it is closed, hence \(T\) is compact. Moreover, due to \eqref{conclusion-oct-2}
\[
 T\subset\{u\le M_0-1\}.
\]
Note also that  $ D\phi(z)=-5|z|^{-3/2}z$ and $D\phi$ is one-to-one on \(\R^n\setminus\{0\}\). Hence it follows from \eqref{conclusion-oct-2} that every \(y\in T\) determines the unique corresponding \(x\in U\). We write \(m(y)=x\). Thus \(m(T)=U\) and
\[
 Du(y)=D\phi(y-m(y)).
\]

\smallskip
Next, we show that \(Du\) is Lipschitz on \(T\). Since \(U\subset B_{1/4}\), \(T\subset B_{1-\sigma_0}\), \(u(x)>M_0\) on \(U\), and \(u(y)\le M_0-1\) on \(T\), we have
\[
u(m(y)) - u(y) \geq 1, \quad \forall  y \in T.
\] 
Also, due to Lemma~\ref{lem:semiconcave-properties}, $u$ is Lipschitz on \(\overline{B}_{1-\sigma_0}\). Hence, there is $\eta>0$, which depends on $u$, such that
\begin{equation} \label{y-my}
 |y-m(y)|\ge\eta>0,\qquad y\in T.
\end{equation}  
Let $y_1,y_2\in T$ with $y_1 \ne y_2$,  and let us denote $r=2|y_1-y_2|>0$. We consider the case that
\begin{equation} \label{case-1-oct-8}
 r<\min\{\sigma_0/2,\eta/2\}.
\end{equation}
In this case, we note that $B_{r}(y_j) \subset B_{1-\sigma_0/2}$ for $j=1,2$. Also, since
$y_j$ minimizes $u(\xi)-\phi(\xi-m(y_j))$ over $\xi \in B_1$,
\begin{equation} \label{minizer-oct-8}
 u(z)\geq u(y_j)
       +\phi(z-m(y_j))-\phi(y_j-m(y_j)), \quad \forall \ z \in B_{1}.
\end{equation}
Also, note that for any $z \in B_r(y_j)$, we have
\begin{equation} \label{w-est-oct-8}
 |(y_j-m(y_j))+t(z-y_j)|\geq |y_j-m(y_j)|-|(z-y_j)|\geq\eta-r>\eta/2, \quad \forall \ t \in [0, 1].
\end{equation}
Moreover, for $\xi\ne0$, we also note that
\begin{equation} \label{D2-phi-oct-8}
 D^2\phi(\xi)
 =-5|\xi|^{-3/2}I
  +\frac{15}{2}|\xi|^{-7/2}\xi\otimes\xi,
 \quad \text{and hence}\quad
 \|D^2\phi(\xi)\|_{\mathrm{op}}\leq5|\xi|^{-3/2}.
\end{equation}
Then, by using Taylor's formula, \eqref{w-est-oct-8},  and the fact that $D\phi(y_j-m(y_j))=Du(y_j)$, we see that
\[
\begin{aligned}
 \phi(w+h))-\phi(w)
 &=Du(y_j)\cdot h
   +\int_0^1(1-t)h^TD^2\phi(w+th)h\,dt\\
 &\geq Du(y_j)\cdot h
   -\frac52(2/\eta)^{3/2}|h|^2,
\end{aligned}
\]
where $w=y_j-m(y_j)$ and $h=z-y_j$, for $z \in B_{r}(y_j)$ and $j =1,2$. Since $|h|<r$, it follows this last estimate and \eqref{minizer-oct-8} that
\begin{equation} \label{u-z-below-oct-3}
 u(z)\geq u(y_j)+Du(y_j)\cdot(z-y_j)
          -\frac52(2/\eta)^{3/2}r^2, \quad \forall \ z \in B_r(y_j),\quad j=1,2.
\end{equation}
On the other hand, since $u$ is semi-concave in $B_1$, there is $K\geq0$ such that
$x\mapsto u(x)-\frac K2|x|^2$ is concave on
$B_{1-\sigma_0/2}$. Then, using 
the concavity of the map $x\mapsto u(x)-\frac K2|x|^2$ and $Du(y_2)=D\phi(y_2-m(y_2))$, we infer that
\[
u(z)-\frac K2|z|^2 \leq u(y_2)-\frac K2|y_2|^2
    +(Du(y_2)-Ky_2)\cdot(z-y_2), \quad z \in B_{1-\sigma_0/2}.
\]
Rearranging the terms and using 
$|z|^2-|y_2|^2-2y_2\cdot(z-y_2)=|z-y_2|^2$, we obtain
\[
 u(z)\leq u(y_2)+Du(y_2)\cdot(z-y_2)
       +\frac K2|z-y_2|^2, \quad z \in B_{1-\sigma_0/2}.
\]
Finally, as $|z-y_2|\leq |z-y_1|+|y_1-y_2|
          <r+\frac r2=\frac{3r}{2}$, it follows that
\[
 u(z)\leq u(y_2)+Du(y_2)\cdot(z-y_2)
          +\frac{9K}{8}r^2, \quad   z \in B_{1-\sigma_0/2}.
\]
On the other hand, observe also that \eqref{u-z-below-oct-3} particular implies that
\[
u(y_1)\geq u(y_2)+Du(y_2)\cdot(y_1-y_2)
          -\frac52(2/\eta)^{3/2}r^2.
\]
By combining the last two estimates, we obtain
\[
u(z) \leq u(y_1) + Du(y_2) \cdot (z-y_1) +Cr^2, \quad \forall \  z \in B_{1-\sigma_0/2}.
\]
From this, and by using \eqref{u-z-below-oct-3} with $j =1$ and the fact that $\overline{B}_{r}(y_1) \cup \overline{B}_r(y_2) \subset B_{1-\sigma_0/2}$, we obtain
\[
 (Du(y_1)-Du(y_2))\cdot(z-y_1)\le Cr^2, \quad \forall \ z \in \overline{B}_r(y_1).
\]
Taking \(z=y_1+re\) and \(z=y_1-re\), with \(e\in\mathbb S^{n-1}\), we obtain
\[
 |Du(y_1)-Du(y_2)|\le Cr
\]
when \eqref{case-1-oct-8} holds. On the other hand, if \eqref{case-1-oct-8} fails, i.e. \(r\ge\min\{\sigma_0/2,\eta/2\}\), it follows from \eqref{y-my} and the definition of $\phi$ that
\begin{align*}
|Du(y_1)-Du(y_2)| & = |D\phi(y_1 - m(y_1)) - D\phi(y_2 - m(y_2))| \leq 2 \sup_{y \in T}|D\phi (y -m(y))| \\
& \leq 2 \sup_{B_{5/4}\setminus B_{\eta}}|D\phi| \leq C |y_1 - y_2|.
\end{align*}
Hence, from both cases, we have proved that \(Du\) is Lipschitz on \(T\). 

\smallskip Next, note from \eqref{conclusion-oct-2} that for each $y\in T$,  $y \ne m(y)$ and 
\[
 Du(y)=D\phi(y-m(y))
 =-5|y-m(y)|^{-3/2}(y-m(y)).
\]
This, \eqref{y-my}, and $|y-m(y)|\le5/4$ imply that $2\sqrt5\le |Du(y)|\le5\eta^{-1/2}$ for all $y\in T$. Hence, we infer that the map $m$ is defined by
\[
 m(y)=y+ \frac{25Du(y)}{|Du(y)|^3}, \quad y \in T,
 \]
is also Lipschitz in $T$.

\smallskip
Next, from Lemma \ref{lem:semiconcave-properties} and \eqref{y-my}, for almost every \(y\in T\), the function \(u\) is twice differentiable and
\[
 D^2u(y)\ge D^2\phi(y-m(y)).
\]
Let us denote \(r_y=|y-m(y)|\). Then $r_y \in (\eta, 5/4]$ and \(D^2\phi(y-m(y))\) has the radial eigenvalue \((5/2)r_y^{-3/2}>0\) with eigenvector $(y -m(y))/r_y$ and the tangential eigenvalue \(-5r_y^{-3/2}\). Therefore
\[
 \operatorname{rank}D^2u(y)\ge1
\]
for almost every \(y\in T\). By Lemma~\ref{lem:semiconcave-properties},
\(Du|_T\) is differentiable relative to \(T\) with derivative
\(D^2u(y)\) at almost every such point. Lemma~\ref{lem:contact-gradient} yields
\[
 |\{y\in T:Du(y)\in\Sigma\}|=0.
\]
As \(r_y\le5/4\), and we have
\[
 |Du(y)| = |D\phi(y-m(y))|=5r_y^{-1/2}\ge2\sqrt5 > \gamma_0.
\]
Thus, for almost every \(y\in T\), Lemma~\ref{lem:semiconcave-properties}
gives a second-order expansion of \(u\), and
Lemma~\ref{lem:contact-gradient} gives \(Du(y)\notin\Sigma\).
At such a point, we also have \(|Du(y)|\ge2\sqrt5>\gamma\). From this and since \(u\) satisfies
\eqref{M-eqn-Oct-3}, we apply Lemma~\ref{lem:semiconcave-properties}to infer that
\[
 \Pm(D^2u(y))-\mu_0|Du(y)|\le1
\]
or equivalently, 
\begin{equation} \label{u-test-PDE}
 \lambda\tr(D^2u(y))^+-\Lambda\tr(D^2u(y))^--\mu_0|Du(y)|\le1.
\end{equation}
In addition, since \(D^2u (y)\ge D^2\phi(y-m(y))\), we have
\[
 \tr(D^2u(y))^-\le\tr\bigl((D^2\phi(y-m(y)))^-\bigr) = 5(n-1)r_y^{-3/2}\le C r_y^{-3/2}.
\]
Using this, $|Du(y)|=5r_y^{-1/2}$ and $r_y \in (\eta, 5/4]$, we infer from \eqref{u-test-PDE} that
\[
\begin{aligned}
 \lambda\operatorname{tr}\bigl((D^2u(y))^+\bigr)
 &\leq1+\Lambda\operatorname{tr}\bigl((D^2u(y))^-\bigr)
          +\mu_0|Du(y)|\\
 &\leq1+
 \left(5\Lambda(n-1)+\frac{25}{4}\mu_0\right)r_y^{-3/2}.
\end{aligned}
\]
Therefore,
\begin{equation} \label{Du-est-Oc-3}
 |D^2u(y)|
 \leq\operatorname{tr}\bigl((D^2u(y))^+\bigr)
     +\operatorname{tr}\bigl((D^2u(y))^-\bigr)
 \leq C\left(1+r_y^{-3/2}\right)
\end{equation}
with some $C = C(n, \lambda, \Lambda, \mu_0)>0$.

\smallskip
Now, by Kirszbraun's theorem (see \cite{Kirszbraun, Valentine}), we can extend $m:T\to\R^n$ to a Lipschitz map
$\widetilde m:\R^n\to\R^n$. Due to Rademacher's theorem (see \cite{Rademacher} and \cite[Theorem 3.2, p. 103]{Evans}), $\widetilde m$ is differentiable at almost every where in $\R^n$. Let $y$ be a density point of $T$ at
which $\widetilde m$ is differentiable and the relative expansion of
$Du|_T$ from Lemma~\ref{lem:semiconcave-properties} holds. For $z\in T$ with
$z$ sufficiently close to $y$,
\begin{equation} \label{Du-expansion-oct-8}
 Du(z)-Du(y)=D^2u(y)(z-y)+o(|z-y|).
\end{equation}
Since $\widetilde m=m$ on $T$,
\[
\begin{aligned}
 z-m(z)-(y-m(y))
 &=(z-y)-\bigl(\widetilde m(z)-\widetilde m(y)\bigr)\\
 &=(I-D\widetilde m(y))(z-y)+o(|z-y|).
\end{aligned}
\]
Using $Du(z)=D\phi(z-m(z))$ on $T$ and expanding $D\phi$ at
$y-m(y)$, we obtain
\[
 Du(z)-Du(y)
 =D^2\phi(y-m(y))(I-D\widetilde m(y))(z-y)
  +o(|z-y|).
\]
By Comparing this expansion and the expansion \eqref{Du-expansion-oct-8}, we infer that
\[
 \bigl[D^2u(y)
   -D^2\phi(y-m(y))(I-D\widetilde m(y))\bigr](z-y)
 =o(|z-y|),
 \qquad z\in T,\quad z\to y.
\]
For each $e\in\mathbb S^{n-1}$, as in \eqref{claim-oct-5}, it follows from the density-one property of $T$ at
$y$ that there is $z_h\in T$ with $z_h-y=he+o(h)$ as $h\downarrow0$.
Substituting $z=z_h$, dividing by $h$, and letting $h\downarrow0$, we get
\[
 D^2u(y)=D^2\phi(y-m(y))(I-D\widetilde m(y)).
\]
From this identity, \eqref{Du-est-Oc-3}, and the fact that  \(\|(D^2\phi(y-m(y)))^{-1}\|\le Cr_y^{3/2}\) due to the explicit formulation of $D^2\phi$ in \eqref{D2-phi-oct-8}, we infer that
\[
 |D\widetilde m(y)|\le1+C r_y^{3/2}|D^2u(y)|\le C_0
\]
for almost every \(y\in T\), where $C_0 = C_0 (n, \lambda, \Lambda, \mu_0)>0$. Since $\widetilde m(T)=m(T)=U$, every $x\in U$ has at least one
preimage in $T$, it follows from the area formula \cite[Theorem 3.8, p.119]{Evans} that
\[
 |U|
 =\int_U 1\,dx
 \leq\int_{\R^n}
   \#\{y\in T:\widetilde m(y)=x\}\,dx
 =\int_T|\det D\widetilde m(y)|\,dy \leq C_0^n|T|.
\]
This and as $|U|\ge|B_{1/4}|-2\delta|B_1|$, it follows that
\[
|B_{1/4}|-2\delta|B_1| \leq C_0^n|T|.
\]
On the other hand, we have \(T\subset\{u\leq M_0-1\}\cap B_1
\subset\{u\leq M_0\}\cap B_1
\). Then, it follows from the measure assumption in \eqref{hyp-oct-3} that  \(|T|\le\delta|B_1|\), and therefore
\[
|B_{1/4}|-2\delta|B_1| \leq  C_0 ^n\delta|B_1|.
\]
Note that $C_0 = C_0(n, \lambda, \Lambda, \mu_0)>0$. Then, we obtain a contradiction for  $\delta \in (0, \bar{\delta}_0]$ when choosing \(\bar{\delta}_0 = \delta_0(n, \lambda, \Lambda, \mu_0) \in (0,1)\) sufficiently small.\\

\noindent
\smallskip
{\bf Step 2}: We remove the  semi-concavity assumption on \(u\) used in {\bf Step 1} by using inf-convolution. Let $M_0>0, \mu_0\ge0$ and $\bar{\delta}_0 = \bar{\delta}_0(n, \lambda, \Lambda, \mu_0) \in (0,1)$ be the numbers defined in {\bf Step 1}. We choose 
\[ \delta_0 = \bar{\delta}_0/4.\]  
Let \(u\) be lower semi-continuous and we assume that \eqref{hyp-oct-3} holds with some $\delta \in (0, \delta_0]$. Let us denote \(v=\min\{u,2M_0\}\). We note that \(v\) is also satisfies  \eqref{M-eqn-Oct-3} in the viscosity sense. Let \(v_\varepsilon\) be the inf-convolution defined in \eqref{inf-convol} in Section~\ref{semi-concave}. For every fixed small \(\eta \in (0,1)\), it follows from Lemma \ref{regularization-lemma} that, in  in \(B_{1-\eta}\), \(v_\varepsilon\) is a semi-concave and it satisfies in the viscosity sense the same equation as $u$ for all sufficiently small \(\varepsilon\). Moreover, observe that
\[
 \{u>M_0\}=\underset{\varepsilon>0}\bigcup\{v_\varepsilon>M_0\}
\]
and the set $\{v_\varepsilon>M_0\}$ increases as \(\varepsilon\downarrow0\). Hence $|\{v_\varepsilon>M_0\}|$ converges to \(|\{u>M_0\}|\) as \(\varepsilon\downarrow0\). 

\smallskip
Fix \(x\in B_{1/4}\), and we aim to prove that $u(x)>1$. Choose \(\eta \in (0, 1- 4|x|)\) so that \(|x|<(1-\eta)/4\).  Then, we define the dilated function
\[
 \bar v_\varepsilon(z)=v_\varepsilon(a z),
 \qquad z\in B_1
\]
where $a= 1-\eta$. As $a \in (0,1)$,  in viscosity sense, $\bar{v}_\epsilon$ satisfies
\[
 \mathcal P^-_{\lambda, \Lambda}(D^2\bar{v}_\epsilon)- a\mu_0|D\bar{v}_\epsilon|
 \leq a^2 \quad \text{in} \quad B_1 \quad \text{when} \quad D\bar{v}_\epsilon \not\in \big(\overline B_{a\gamma} \cup a\Sigma\big).
\]
Because $a \in (0,1)$, in viscosity sense, we also have
\begin{equation} \label{v-epsilont-eqn}
 \mathcal P^-_{\lambda, \Lambda}(D^2\bar{v}_\epsilon)- \mu_0|D\bar{v}_\epsilon|
 \leq 1, \quad \text{in} \quad B_1 \quad \text{when} \quad D\bar{v}_\epsilon \not\in \big(\overline B_{a\gamma} \cup a\Sigma\big).
\end{equation}
Note that the set $a\Sigma$ is closed and purely one-unrectifiable as proved in Lemma \ref{lem:stability-sigma}.  As $\eta>0$ is sufficiently small,  we have
$a^n>1/2$. Since $v_\varepsilon\uparrow v=\min\{u,2M_0\}$ in $B_a$
and $\{v\leq M_0\}=\{u\leq M_0\}$, it follows from the continuity of Lebesgue measure and  the measure hypothesis \eqref{hyp-oct-3}  that
\[
 |\{v_\varepsilon\leq M_0\}\cap B_a|
 \longrightarrow |\{u\leq M_0\}\cap B_a|
 \leq\delta|B_1| \quad \text{as} \quad \epsilon \longrightarrow 0^+.
\]
Hence, for all sufficiently small $\varepsilon$,
\[
 |\{v_\varepsilon\leq M_0\}\cap B_a|
 <2\delta_0|B_1|
 <\bar\delta_0|B_a|.
\]
From this, and the change of variables $x=az$, we see that
\[
\begin{aligned}
 |\{\bar v_\varepsilon>M_0\}\cap B_1|
 &=a^{-n}|\{v_\varepsilon>M_0\}\cap B_a|\\
 &=a^{-n}\bigl(|B_a|-|\{v_\varepsilon\leq M_0\}\cap B_a|\bigr)\\
 &>(1-\bar\delta_0)|B_1|.
\end{aligned}
\]
Thus $\bar v_\varepsilon$ satisfies the measure hypothesis  \eqref{hyp-oct-3} when $\epsilon$ is sufficiently small.  From this and as \eqref{v-epsilont-eqn}, we apply the result that we just proved in {\bf Step 1} for $\bar{v}_\epsilon$ with sufficiently small $\epsilon$ to obtain
\[
 \bar v_\varepsilon>1\quad\text{in }B_{1/4},
 \qquad\text{hence}\qquad
 v_\varepsilon>1\quad\text{in }B_{(1-\eta)/4}.
\]
Since \(u\ge v_\varepsilon\) and $x \in B_{(1-\eta)/4}$, we obtain \(u(x)>1\). As \(x\in B_{1/4}\) was arbitrary, the proof is complete.
\end{proof}

\section{A strict barrier and the doubling property} \label{doubling-property}

The radial barrier in \cite{ImbertSilvestre} may have gradients in $\Sigma$. The following technical comparison lemma accounts for these gradients.
\begin{lemma}\label{lem:strict-comparison}
Let $\Sigma\subset\R^n$ be closed and purely one-unrectifiable. Let $\Omega\subset\R^n$ be non-empty, open and bounded, and $u\in C(\overline\Omega)$ satisfy
\[
 \Mminus(D^2u,Du)\le1\quad\text{in }\Omega
\]
in the viscosity sense. Suppose $\psi\in C^2(\overline\Omega)$ such that $u\ge\psi$ on $\partial\Omega$ and that there are $\eta,c_0>0$ such that, for every $x\in\Omega$ and $|q|\le\eta$,
\[
 |D\psi(x)+q|>\gamma,
 \quad \text{and} \quad
 \Pm(D^2\psi(x))-\mu_0|D\psi(x)+q|\ge1+c_0.
\]
Assume also that $D^2\psi(x)$ has a positive eigenvalue for every $x\in\Omega$. Then $u\ge\psi$ in $\Omega$.
\end{lemma}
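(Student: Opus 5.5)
We argue by contradiction and use tilted contact sets, as described in the introduction. Suppose $\min_{\overline\Omega}(u-\psi)=-m<0$. For $q\in\R^n$ with $|q|\le\eta$ we consider the tilted function $w_q(x)=u(x)-\psi(x)-q\cdot x$. Let $d=\operatorname{diam}\Omega$ and fix $x_0$ with $u(x_0)-\psi(x_0)=-m$; we may translate so that $x_0=0$. For $|q|<\min\{\eta,m/(2d)\}$, the minimum of $w_q$ over $\overline\Omega$ is at most $-m$. On $\partial\Omega$ we have $w_q\ge-|q|d>-m/2$. Hence the minimum is attained only in the interior.

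Let $T$ be the set of points $y\in\Omega$ at which some $w_q$ attains its minimum, with $q$ ranging over the ball $Q=B_{\rho}$, where $\rho<\min\{\eta,m/(2d)\}$. At such a point, $\psi+q\cdot x+\min w_q$ touches $u$ from below at $y$ with gradient $p=D\psi(y)+q$. If $p\notin\Sigma$, then $|p|>\gamma$ by hypothesis, and the viscosity inequality gives $\Pm(D^2\psi(y))-\mu_0|D\psi(y)+q|\le1$. This contradicts the assumed $\ge1+c_0$. So it suffices to find one contact point whose gradient avoids $\Sigma$.

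To get measure and regularity, we first replace $u$ by its inf-convolution $u_\varepsilon$ (Lemma \ref{regularization-lemma}). This requires a mild localization, since $u_\varepsilon$ is a supersolution only on slightly smaller sets; the strict margin $c_0$ and the boundary gap $m/2$ survive small perturbations. With $u_\varepsilon$ semi-concave, we run the argument of Lemma \ref{lem:measure}, Step 1.

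At each contact point $y$, $u_\varepsilon$ is touched below by a $C^2$ function. By Lemma \ref{lem:semiconcave-properties} it is differentiable there, with $Du_\varepsilon(y)=D\psi(y)+q$. Hence $q(y)=Du_\varepsilon(y)-D\psi(y)$ is uniquely determined by $y$. The map $y\mapsto q(y)$ hits every $q\in Q$, since each $w_q$ has an interior minimizer. The same two-sided argument as in Lemma \ref{lem:measure} shows $Du_\varepsilon$ is Lipschitz on $T$: $u_\varepsilon$ is squeezed between a paraboloid from below ($\psi$ is $C^2$) and the semi-concavity paraboloid from above. So $q(\cdot)$ is Lipschitz on $T$. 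Extending it by Kirszbraun and applying the area formula gives
\[
|Q|\le\int_T|\det D\tilde q|\,dy\le L^n|T|,
\]
and therefore $|T|>0$.

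Next apply Lemma \ref{lem:contact-gradient} with $E=T$ and $G=Du_\varepsilon|_T$. By Lemma \ref{lem:semiconcave-properties}, at almost every $y\in T$ the function $u_\varepsilon$ has a second-order expansion and $G$ is differentiable relative to $T$ with derivative $D^2u_\varepsilon(y)$. Since $u_\varepsilon-\psi$ has a local minimum at $y$, we get $D^2u_\varepsilon(y)\ge D^2\psi(y)$. The latter has a positive eigenvalue, so $D^2u_\varepsilon(y)$ does too, and its rank is at least $1$. The lemma gives $|\{y\in T:Du_\varepsilon(y)\in\Sigma\}|=0$. As $|T|>0$, there is $y\in T$ with a second-order expansion and $Du_\varepsilon(y)=D\psi(y)+q(y)\notin\Sigma$, and $|Du_\varepsilon(y)|>\gamma$. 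The last assertion of Lemma \ref{lem:semiconcave-properties} then gives
\[
\Pm(D^2u_\varepsilon(y))-\mu_0|D\psi(y)+q|\le1.
\]
Ellipticity and $D^2u_\varepsilon(y)\ge D^2\psi(y)$ give $\Pm(D^2\psi(y))\le\Pm(D^2u_\varepsilon(y))$. Together these contradict $\Pm(D^2\psi(y))-\mu_0|D\psi(y)+q|\ge1+c_0$.

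The main obstacle is the inf-convolution bookkeeping. One must ensure that $u_\varepsilon\ge\psi-m/4$ near $\partial\Omega$, using uniform convergence since $u\in C(\overline\Omega)$. One must also ensure that the contact points stay in a region $\Omega'\Subset\Omega$ where $u_\varepsilon$ satisfies the inequality. We handle both by taking the minimizing region $\Omega'=\{x\in\Omega:\operatorname{dist}(x,\partial\Omega)>\kappa\}$ with $\kappa$ small, using the uniform continuity of $u-\psi$ to keep $w_q>-m/2$ on $\partial\Omega'$, and then letting $\varepsilon\to0$.
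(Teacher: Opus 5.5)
Your proposal is correct and follows essentially the same route as the paper's proof. Both argue by contradiction, inf-convolve and localize away from $\partial\Omega$, and take the tilted contact set $T$. On $T$ the map $Q=Du_\varepsilon-D\psi$ is Lipschitz and onto a ball, which forces $|T|>0$, and Lemma~\ref{lem:contact-gradient} then yields an active contact point contradicting the strict inequality. The only differences are bookkeeping: you localize with $\{\operatorname{dist}(\cdot,\partial\Omega)>\kappa\}$ instead of a compact $K$, and you should state that $T$ is measurable (for instance, it is compact if $q$ ranges over the closed ball).
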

\begin{proof} Suppose by contradiction that $\min_{\overline\Omega}(u-\psi)=-a<0$ with some $a>0$. Since $u-\psi\ge0$ on $\partial\Omega$, we can choose a compact set $K\Subset\Omega$ such that
\[
 \min_K(u-\psi)=-a,
 \qquad
 u-\psi>-\frac a4\quad\text{on }\overline\Omega\setminus K.
\]
Let $u_\varepsilon$ be the inf-convolution of $u$ over $\overline\Omega$ defined in \eqref{inf-convol}. It follows that $u_\epsilon$ converges uniformly to $u$ on $\overline\Omega$. Also, for a sufficiently small $\varepsilon$, and let $K_\epsilon$ be a neighborhood of $K$, $K \Subset K_\epsilon \Subset \Omega$, such that $u_\varepsilon$ satisfies the same viscosity inequality as $u$ in $K_\epsilon$, and $u_\varepsilon$ is semi-concave in $K_\epsilon$. Let us fix \(\varepsilon>0\) sufficiently small such that these properties hold,  and 
\[
 \min_Kw<-\frac{3a}{4},
 \quad \text{and} \quad
 w>-\frac a3\quad\text{on }\overline\Omega\setminus K
\]
where $w=u_\varepsilon-\psi$. Decrease $\eta$, if necessary, we may assume without loss of generality that  $2\eta\operatorname{diam}\Omega<a/6$. For each $q\in\overline B_\eta$, let $y_q$ be any minimizer of $w(x)-q\cdot x$ over $\overline\Omega$. Comparing its values at a point where $w<-3a/4$ and at any point of $\overline\Omega\setminus K$, we infer that every such minimizer $y_q$ belongs to $K$. Let us define
\[
 T=\displaystyle{\underset{q\in\overline B_\eta}\cup}\big \{y\in\overline\Omega:  w(y)-q\cdot y=\min_{x \in \overline\Omega}(w(x)-q\cdot x)\big\} \subset K.
\]
Because contact relation is closed in $K\times\overline B_\eta$, $T$ is compact. Corresponding to some $q \in \overline{B}_\eta$, at $y_q \in T$, the function $x\mapsto\psi(x)+q\cdot(x-y_q)+u_\varepsilon(y_q)-\psi(y_q)$ touches $u_\varepsilon$ from below. Then, applying Lemma~\ref{lem:semiconcave-properties}, we have
\[
 Du_\varepsilon(y_q)=D\psi(y_q)+q \quad \text{or} \quad Du_\varepsilon(y_q)- D\psi(y_q) =q.
\]
Thus, $q$ is uniquely determined, and the map 
\[
Q : T \mapsto \overline{B}_\eta \subset R^n: \quad Q =Du_\varepsilon-D\psi
\]
is well-defined and $Q(T) = \overline{B}_\eta$. We claim that the map $Q$ is Lipschitz on $T$. Indeed, let $y_j\in T$ with  $j=1,2$, we infer from the minimum at $y_1$ and the minimum at $y_2$ that
\begin{equation} \label{minimize-oct-4}
 \begin{aligned}
 w(z)&\ge w(y_1)+Q(y_1) \cdot(z-y_1), \quad z \in \Omega,\\
 w(y_1)&\ge w(y_2)+Q(y_2) \cdot(y_1-y_2).
 \end{aligned}
\end{equation}
Let $h = \operatorname{dist}(K_\epsilon,\partial\Omega)/2)>0$. Then, choose a semi-concavity constant $\kappa_0>0$ for $w$ on $\overline{K}_\epsilon$. At $y_2$, we have
\[
 w(z)\le w(y_2)+ Q(y_2) \cdot(z-y_2)+\frac{\kappa_0}{2}|z-y_2|^2, \quad \forall \ z\in B_h(y_2).
\]
Combining this with \eqref{minimize-oct-4}, we infer that
\begin{equation} \label{Q-y12-oct-8}
 (Q(y_1) -Q(y_2))\cdot(z-y_1)\le\frac{\kappa_0}{2}|z-y_2|^2, \quad \forall \ z\in B_h(y_2).
\end{equation}
Let us denote $d=|y_1-y_2|$. If $2d<h$, we use $z=y_1\pm de$ with $|e|=1$ in \eqref{Q-y12-oct-8} to infer that
\[
 \pm(Q(y_1)-Q(y_2))\cdot e\le2\kappa_0 d \quad \text{and therefore} \quad
 \quad |Q(y_1)-Q(y_2)|\le2 \kappa_0|y_1-y_2|.
\]
On the other hand, if $2d\ge h$, then $|Q(y_1)-Q(y_2)|\le2\eta\le(4\eta/h)d$. Thus $Q$ is Lipschitz on $T$ with the Lipschitz constant
\[
L = \max\{2\kappa_0, 4\eta/h\}.
\]
Since $Q(T)=\overline B_\eta$ and $Q$ Lipschitz, it follows that
\[
|B_\eta|= |Q(T)| \leq L^n |T|.
\]
Therefore, $|T|>0$.

\smallskip
As $u_\varepsilon$ is semi-concave in $K_\epsilon$ and $T \subset K \subset K_\epsilon$, it follows from Alexandrov's theorem that at almost every $y\in T$, the second-order expansion of $u_\varepsilon$ exists. Moreover, as $x \mapsto u_\varepsilon(x)-\psi(x)-q\cdot x$ attains the  minimum at $y$, we have
\[
 D^2u_\varepsilon(y)\ge D^2\psi(y).
\]
Hence $D^2u_\varepsilon(y)$ has a positive eigenvalue and $\operatorname{rank}D^2u_\varepsilon(y)\ge1$. Since $Du_\varepsilon|_T=Q+D\psi$ is Lipschitz, its relative derivative at almost every such $y$ is $D^2u_\varepsilon(y)$ by Lemma~\ref{lem:semiconcave-properties}. Then, we infer from Lemma~\ref{lem:contact-gradient} that
\[
 |\{y\in T:Du_\varepsilon(y)\in\Sigma\}|=0.
\]
From this and as $|T|>0$, we can choose $y \in T$ where $u_\epsilon$ admits the second-order expansion  and $Du_\varepsilon(y)\notin\Sigma$. From assumption on $\psi$, $|Du_\varepsilon(y)|=|D\psi(y)+q|>\gamma$. It then follows from Lemma~\ref{lem:semiconcave-properties}, the assumption in the lemma, and the fact that $D^2u_\varepsilon(y)\ge D^2\psi(y)$ that
\[
 1\ge\Pm(D^2u_\varepsilon(y))-\mu_0|Du_\varepsilon(y)|
 \ge\Pm(D^2\psi(y))-\mu_0|D\psi(y)+q|
 \ge1+c_0.
\]
Hence, we have a contradiction as $c_0>0$, and the proof is complete.
\end{proof}

\smallskip
We now state and prove the doubling property.
\begin{lemma}\label{lem:doubling} For every  $0 <\lambda <\Lambda$, $\mu_0 \geq 0$,  and $\gamma_0>0$ ,  there is $M_1>1$ such that the following assertion holds. Assume that $\Sigma$ is a closed purely one-unrectifiable set,  $0\le\gamma\le\gamma_0$, and $u\in C(B_2)$ is non-negative and satisfies
\[
\Mminus(D^2u,Du)\le1\quad\text{in }B_2,
\]
in the viscosity sense,  and $u>M_1$ in $B_{1/4}$. Then
\begin{equation} \label{doubling-est}
u>1\quad\text{in }B_{3/2}.
\end{equation}
\end{lemma}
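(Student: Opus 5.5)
We follow the radial-barrier argument of \cite{ImbertSilvestre}, with the direct comparison replaced by Lemma~\ref{lem:strict-comparison}. Let $\Omega=B_{7/4}\setminus\overline B_{1/8}$. For $p>0$ large and constants $c_1,c_2>0$ still to be chosen, set
\[
 \psi(x)=c_1\bigl(|x|^{-p}-(7/4)^{-p}\bigr),
\]
so that $\psi=0$ on $\partial B_{7/4}$ and $\psi=c_1\bigl(8^p-(7/4)^{-p}\bigr)$ on $\partial B_{1/8}$.

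\textbf{Barrier computations.} For $r=|x|$, the Hessian $D^2\psi(x)$ has radial eigenvalue $c_1p(p+1)r^{-p-2}>0$ and tangential eigenvalues $-c_1pr^{-p-2}$ of multiplicity $n-1$. It follows that
\[
 \Pm(D^2\psi)=c_1pr^{-p-2}\bigl(\lambda(p+1)-\Lambda(n-1)\bigr),
 \qquad
 |D\psi|=c_1pr^{-p-1}.
\]
For any $|q|\le\eta$ we then have
\[
 \Pm(D^2\psi)-\mu_0|D\psi+q|\ge c_1pr^{-p-2}\bigl(\lambda(p+1)-\Lambda(n-1)-\mu_0 r\bigr)-\mu_0\eta.
\]
Since $r\le7/4$, the bracket is at least $1$ once $p=p(n,\lambda,\Lambda,\mu_0)$ is large, and $r^{-p-2}\ge(7/4)^{-p-2}$. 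So taking $c_1$ large, depending on $p$, $n$, $\lambda$, $\Lambda$, $\mu_0$, $\gamma_0$, and then $\eta=1$, gives
\[
 \Pm(D^2\psi)-\mu_0|D\psi+q|\ge 2 .
\]
We also need $|D\psi+q|\ge c_1p(7/4)^{-p-1}-1>\gamma_0\ge\gamma$, which again holds for $c_1$ large. The hypothesis that $D^2\psi$ has a positive eigenvalue is the radial eigenvalue above. Note that all these requirements push $c_1$ upward, and none of them involves the boundary data, so they are compatible.

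\textbf{Boundary comparison and the role of $\Sigma$.} Set $M_1=c_1 8^p+1$. On $\partial B_{1/8}\subset B_{1/4}$ we have $u>M_1>\psi$, and on $\partial B_{7/4}$ we have $u\ge0=\psi$. Also $u\in C(\overline\Omega)$ because $\overline\Omega\subset B_2$. Lemma~\ref{lem:strict-comparison}, applied with $c_0=1$, then gives $u\ge\psi$ in $\Omega$. This step is exactly where the unrectifiable set $\Sigma$ is handled: the possible gradients $D\psi\in\Sigma$ are dealt with inside that lemma, through the tilts $q$. The only real check is the uniformity in $q$ of the gradient lower bound, and it holds because $|D\psi|$ is bounded below on $\Omega$ by a quantity that grows with $c_1$.

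\textbf{Conclusion and main obstacle.} On $B_{3/2}\setminus B_{1/8}$,
\[
 u\ge\psi\ge c_1\bigl((3/2)^{-p}-(7/4)^{-p}\bigr).
\]
The main obstacle is making this exceed $1$ while keeping all previous constraints. This is resolved by choosing $c_1$ last: every earlier condition requires only that $c_1$ be large, and so does $c_1\bigl((3/2)^{-p}-(7/4)^{-p}\bigr)>1$ since $p$ is already fixed. On $B_{1/8}$ we have $u>M_1>1$ directly from the hypothesis. Together these give $u>1$ in $B_{3/2}$, which is \eqref{doubling-est}, with $M_1$ depending only on $n,\lambda,\Lambda,\mu_0,\gamma_0$.
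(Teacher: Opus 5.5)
Your proposal is correct and follows the paper's argument: a radial barrier $c_1\bigl(|x|^{-p}-(7/4)^{-p}\bigr)$ on an annulus, with constants chosen large enough to meet the hypotheses of Lemma~\ref{lem:strict-comparison} (tilts $|q|\le 1$, $c_0=1$), which then gives $u\ge\psi$. The differences are only cosmetic: you take inner radius $1/8$, so the inner boundary data is strict without the paper's continuity argument on $\partial B_{1/4}$, and you choose $c_1$ so that $\psi>1$ strictly on $B_{3/2}$, which makes the paper's second comparison with $(1+\theta)B$ unnecessary.
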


\begin{proof} Let  us fixed $p>0$ such that $\lambda(p+1)-\Lambda(n-1)-2\mu_0>0$. Then, let $b(x)=|x|^{-p}$, and it follows from the direct calculation that
\[
 \Pm(D^2b)-\mu_0|Db|
 =pr^{-p-2}\bigl(\lambda(p+1)-\Lambda(n-1)-\mu_0r\bigr), \quad r = |x| <2.
\]
Now, with some $M_1>0$ to be determined shortly, let
\[
 B(x)=\frac{M_1}{2\cdot4^p}\bigl(|x|^{-p}-(7/4)^{-p}\bigr),
 \qquad \frac14\le|x|\le\frac74.
\]
We have $B=0$ on $\partial B_{7/4}$ and $B<M_1/2$ on $\partial B_{1/4}$. For some fixed $\gamma_0>0$, by choosing $M_1$ sufficiently large, we obtain, for $1/4<|x|<7/4$ and $|q|\le1$,
\[
 B\ge1\quad\text{if }|x|\le3/2,
 \qquad |DB|>\gamma_0+1,
 \qquad \Pm(D^2B)-\mu_0|DB+q|\ge2.
\]
Indeed, the last inequality follows from
\[
 \Pm(D^2B)-\mu_0|DB+q|
 \ge \frac{M_1p}{2\cdot4^p}r^{-p-2}
       \bigl(\lambda(p+1)-\Lambda(n-1)-\mu_0r\bigr)-\mu_0.
\]
Note also that the radial eigenvalue of $D^2B$ is positive.

\smallskip
Now, we apply Lemma~\ref{lem:strict-comparison} in $B_{7/4}\setminus\overline B_{1/4}$. Note that in this domain, on the outer boundary $\partial B_{7/4}$, we have$u\ge0=B$, and on the inner boundary $\partial B_{1/4}$, we have $u\ge M_1>B$ due to the assumption in the lemma and the continuity of $u$. Thus $u\ge B$ in the annulus $B_{7/4}\setminus\overline B_{1/4}$, which is almost the assertion in the lemma except that we may have the equality. To derive the assertion with the strict inequality, we note that for with $\theta>0$ sufficiently small, we also have $u \geq (1+\theta)B$ on $\partial (B_{7/4}\setminus\overline B_{1/4})$. Moreover,
\[
 \begin{aligned}
& \Pm(D^2((1+\theta)B))-\mu_0|D((1+\theta)B)+q|\\
 &=(1+\theta)\Big[\Pm(D^2B)-\mu_0\big|DB+\frac q{1+\theta}\big|\Big]\\
 &\ge2(1+\theta),\qquad |q|\le1.
 \end{aligned}
\]
Moreover, $|D((1+\theta)B)+q|>\gamma$.  Then, applying Lemma~\ref{lem:strict-comparison} again, we also infer that $u\ge(1+\theta)B>1$ in $B_{3/2}\setminus\overline B_{1/4}$. From this and the assumption that $u>M_1>1$ in $\overline{B}_{1/4}$, we see that $u>1$ on $B_{3/2}$. Hence, the estimate \eqref{doubling-est} is proved, and the proof is complete.
\end{proof}

\smallskip
Now, we combine Lemma~\ref{lem:measure} and Lemma \ref{lem:doubling} to derive the following result.
\begin{corollary}\label{cor:measure-point}For every $0 <\lambda<\Lambda$ and $\mu_0 \geq 0$, there are $\gamma_0>0$, $\delta_0 \in(0,1)$, and $M>1$ such that the following holds. Let $\Sigma$ be closed and purely one-unrectifiable, $0\le\gamma\le\gamma_0$, and $u\in C(B_2)$ be non-negative and satisfy
\[
 \Mminus(D^2u,Du)\le1\quad\text{in }B_2
\]
in the viscosity sense. If
\[
 |\{u>M\}\cap B_1|>(1-\delta)|B_1|,
\]
for some $\delta \in (0, \delta_0]$, then $u>1$ in $B_{3/2}$.
\end{corollary}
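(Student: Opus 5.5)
The plan is to chain Lemma~\ref{lem:measure} and Lemma~\ref{lem:doubling}, rescaling $u$ so that the output level of the first lemma matches the input level of the second. Let $\gamma_0^{(1)},\delta_0^{(1)},M_0$ denote the constants of Lemma~\ref{lem:measure}. Apply Lemma~\ref{lem:doubling} with its threshold parameter equal to $\gamma_0^{(1)}$ to obtain $M_1>1$. Then set
\[
\gamma_0=\frac{\gamma_0^{(1)}}{M_1},\qquad \delta_0=\delta_0^{(1)},\qquad M=M_1M_0 .
\]
All three depend only on $n,\lambda,\Lambda,\mu_0$.

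First we rescale so that Lemma~\ref{lem:measure} applies. Let $u$ satisfy the hypotheses of the corollary and put $v=u/M_1$. Dividing the test functions by $M_1$ shows that in $B_2$, in the viscosity sense,
\[
\Pm(D^2v)-\frac{\mu_0}{M_1}|Dv|\le \frac1{M_1}\quad\text{when } Dv\notin \overline B_{\gamma/M_1}\cup M_1^{-1}\Sigma .
\]
Since $M_1>1$, we have $-\mu_0|p|\le -\mu_0|p|/M_1$ and $1/M_1\le 1$. Hence $v$ satisfies $\mathcal M^-_{\gamma/M_1,\,M_1^{-1}\Sigma,\,\mu_0}(D^2v,Dv)\le 1$ in $B_2$, and so also in $B_1$. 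By Lemma~\ref{lem:stability-sigma}, $M_1^{-1}\Sigma$ is closed and purely one-unrectifiable. Moreover $\gamma/M_1\le \gamma_0/M_1\le\gamma_0^{(1)}$ (indeed $\gamma\le\gamma_0$ already gives $\gamma/M_1\le \gamma_0^{(1)}/M_1^2$). We also have $v\ge0$, and
\[
\{v>M_0\}=\{u>M\},\quad\text{so}\quad |\{v>M_0\}\cap B_1|>(1-\delta)|B_1|,\qquad \delta\le\delta_0 .
\]
Lemma~\ref{lem:measure} therefore gives $v>1$ in $B_{1/4}$, that is, $u>M_1$ in $B_{1/4}$.

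Next we apply Lemma~\ref{lem:doubling} to $u$ itself, with the original $\Sigma$, the threshold $\gamma\le\gamma_0\le\gamma_0^{(1)}$ (using $M_1>1$), and the inequality $\Mminus(D^2u,Du)\le1$ in $B_2$. Here $u\in C(B_2)$ is non-negative and $u>M_1$ in $B_{1/4}$, so the lemma yields $u>1$ in $B_{3/2}$, which is the claim.

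There is no substantial obstacle; the only care needed is bookkeeping in the scaling step. One must check that the rescaled operator is dominated by the unscaled extremal operator with drift constant $\mu_0$ and right-hand side $1$, and that the rescaled exceptional set $M_1^{-1}\Sigma$ remains admissible. The threshold must be chosen small enough that both lemmas apply simultaneously. The choice $\gamma_0=\gamma_0^{(1)}/M_1$ is safe, and no circularity arises because $M_1$ was fixed before $\gamma_0$ was defined.
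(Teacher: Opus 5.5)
Your proposal is correct and is essentially the paper's proof: rescale $v=u/M_1$, apply Lemma~\ref{lem:measure} with exceptional set $M_1^{-1}\Sigma$ to get $u>M_1$ in $B_{1/4}$, then apply Lemma~\ref{lem:doubling} to $u$ itself. Two minor points:
\begin{itemize}
\item The scaling $v=u/M_1$ divides $D^2v$ and $Dv$ by the same factor, so the correct inequality is $\Pm(D^2v)-\mu_0|Dv|\le 1/M_1$. Your intermediate version with drift coefficient $\mu_0/M_1$ is misstated, but the conclusion you draw from it is right.
\item Your choice $\gamma_0=\gamma_0^{(1)}/M_1$ is harmless but unnecessary. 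The paper simply takes $\gamma_0=\gamma_0^{(1)}$, since $\gamma/M_1\le\gamma$.
\end{itemize}
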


\begin{proof}
Let $M_0,\delta_0,\gamma_0$ be the constants in Lemma~\ref{lem:measure}, then let $M_1$ be the constant defined in Lemma~\ref{lem:doubling}. Let $M=M_0M_1$, and $v=u/M_1$. Note that $v$ satisfies, in the viscosity sense, that
\[
 \Pm(D^2v)-\mu_0|Dv|\le\frac1{M_1}\le1 \quad \text{in} \quad B_2  \quad \text{when} \quad |Dv|>\gamma/M_1 \quad \text{and} \quad Dv\notin M_1^{-1}\Sigma.
\]
The set $M_1^{-1}\Sigma$ is closed and purely one-unrectifiable by Lemma~\ref{lem:stability-sigma}. Since
\[
 |\{v>M_0\}\cap B_1|>(1-\delta)|B_1|,
\]
It follows from Lemma~\ref{lem:measure} that $u>M_1$ in $B_{1/4}$. From this, we apply Lemma~\ref{lem:doubling} to obtain $u>1$ in $B_{3/2}$.
\end{proof}

\smallskip
By using the dilation and scaling, we record the following result which is the scaled version of Corollary \ref{cor:measure-point}.
\begin{corollary}\label{cor:scaled-measure}
Let $\gamma_0,\delta_0, M$ be as in Corollary~\ref{cor:measure-point}. Suppose $0<r\le2$, $\kappa\ge1$, $0\le\gamma\le\gamma_0$, $\Sigma$ is closed and purely one-unrectifiable. Suppose that $u\in C(B_r)$ is non-negative satisfies, in the viscosity sense,
\[
\Mminus(D^2u,Du)\le\kappa\quad\text{in }B_r.
\]
If
\begin{equation} \label{measure-hyp-oct-8-1}
 |\{u>\kappa M\}\cap B_{r/2}|>(1-\delta)|B_{r/2}|,
\end{equation}
with $\delta \in (0, \delta_0]$, then $u>\kappa$ in $B_{r/2}$.
\end{corollary}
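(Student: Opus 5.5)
We reduce Corollary~\ref{cor:scaled-measure} to Corollary~\ref{cor:measure-point} by rescaling $B_{r/2}$ to $B_1$ and dividing by $\kappa$. Define
\[
 v(z)=\frac{u\bigl(\tfrac r2 z\bigr)}{\kappa},\qquad z\in B_2 .
\]
Then $v\in C(B_2)$ is non-negative, since $u\in C(B_r)$ and $\tfrac r2 z\in B_r$ for $z\in B_2$. The measure hypothesis \eqref{measure-hyp-oct-8-1} becomes, after the change of variables $x=\tfrac r2 z$ (which scales Lebesgue measure by $(r/2)^n$ on both sides),
\[
 |\{v>M\}\cap B_1|>(1-\delta)|B_1| .
\]
The conclusion $v>1$ in $B_{3/2}$ would give $u>\kappa$ in $B_{3r/4}\supset B_{r/2}$, which is stronger than what is claimed.

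The main step is to check the viscosity inequality for $v$. Let $\varphi$ touch $v$ from below at $z_0$, and set $\Phi(x)=\kappa\varphi(2x/r)$, which touches $u$ from below at $x_0=\tfrac r2 z_0$. We have
\[
 D\Phi(x_0)=\frac{2\kappa}{r}D\varphi(z_0),\qquad D^2\Phi(x_0)=\frac{4\kappa}{r^2}D^2\varphi(z_0).
\]
The gradient condition $D\Phi(x_0)\notin \overline B_\gamma\cup\Sigma$ is equivalent to
\[
 D\varphi(z_0)\notin \overline B_{\gamma'}\cup\Sigma',\qquad \gamma'=\frac{r\gamma}{2\kappa},\quad \Sigma'=\frac{r}{2\kappa}\Sigma .
\]
Here $\Sigma'$ is closed and purely one-unrectifiable by Lemma~\ref{lem:stability-sigma}(iii). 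Under this condition the inequality for $u$ gives
\[
 \frac{4\kappa}{r^2}\Pm\bigl(D^2\varphi\bigr)-\mu_0\frac{2\kappa}{r}|D\varphi|\le\kappa .
\]
Multiplying by $r^2/(4\kappa)$ yields
\[
 \Pm\bigl(D^2\varphi\bigr)-\mu_0\frac r2|D\varphi|\le\frac{r^2}{4}\le1 .
\]
Since $r\le2$, we have $\tfrac r2\mu_0|D\varphi|\le\mu_0|D\varphi|$, so $\mathcal M^-_{\gamma',\Sigma',\mu_0}(D^2v,Dv)\le1$ in $B_2$ in the viscosity sense.

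It remains to check the threshold: $\gamma'\le\gamma\le\gamma_0$ because $r\le2$ and $\kappa\ge1$. This is exactly where the restrictions $r\le 2$ and $\kappa\ge1$ are needed. The constants $\gamma_0,\delta_0,M$ of Corollary~\ref{cor:measure-point} depend only on $(n,\lambda,\Lambda,\mu_0)$ and not on $\Sigma$, so Corollary~\ref{cor:measure-point} applies to $v$ with $\gamma'$ and $\Sigma'$. It gives $v>1$ in $B_{3/2}$, hence $u>\kappa$ in $B_{r/2}$.

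There is no genuine obstacle here. The only points needing care are that the exceptional set transforms to $\tfrac{r}{2\kappa}\Sigma$, which stays in the admissible class with the same constants, and that the gradient-term and threshold coefficients only improve under this scaling.
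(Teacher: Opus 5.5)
Your proof is correct and follows the paper's argument essentially step for step. The paper also sets $v(x)=\kappa^{-1}u(rx/2)$ on $B_2$, notes the new threshold $r\gamma/(2\kappa)\le\gamma_0$ and exceptional set $(r/(2\kappa))\Sigma$, uses $r\le2$ to absorb the factor $r/2$ in the drift term and $r^2/4\le1$ on the right-hand side, and then applies Corollary~\ref{cor:measure-point}. Your explicit test-function transfer $\Phi(x)=\kappa\varphi(2x/r)$ simply writes out what the paper summarizes as ``the same calculation applies to test functions.''
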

\begin{proof} Using the scaling and dilation, let us denote $v(x)=\kappa^{-1}u(rx/2)$ for $x\in B_2$. The threshold and exceptional set for $v$ are $r\gamma/(2\kappa)\le\gamma_0$ and $(r/(2\kappa))\Sigma$, respectively. At active gradients,
\[
 \begin{aligned}
 \Pm(D^2v(x))-\mu_0|Dv(x)|
 &\le\Pm(D^2v(x))-\frac r2\mu_0|Dv(x)|\\
 &=\frac{r^2}{4\kappa}\bigl(\Pm(D^2u(rx/2))-\mu_0|Du(rx/2)|\bigr)
 \le\frac{r^2}{4}\le1.
 \end{aligned}
\]
 The same calculation applies to test functions. It also follows from the measure hypothesis \eqref{measure-hyp-oct-8-1} that 
\[
 |\{v>M\}\cap B_1|>(1-\delta)|B_1|.
\]
Hence, by Corollary~\ref{cor:measure-point}, it follows that  $v>1$ in $B_{3/2}$. From this, we infer that $u>\kappa$ in $B_{r/2}$ and the corollary is proved.
\end{proof}

\section{The \texorpdfstring{\(L^\varepsilon\)}{L-epsilon}-estimates} \label{L-eps-Sec}
We begin with the following growing-ink-spots lemma introduced by N. V. Krylov and M. V. Safonov, see \cite{Safonov} and also \cite[Lemma B.1, p. 409]{Krylov} for the parabolic case.
\begin{lemma} \label{lem:ink}
Let $E\subset F\subset B_1$ be open, and let $\delta\in(0,1)$. Suppose $|E|\le(1-\delta)|B_1|$ and every ball $B\subset B_1$ satisfying $|B\cap E|>(1-\delta)|B|$ is contained in $F$. Then
\[
 |E|\le(1-5^{-n}\delta)|F|.
\]
\end{lemma}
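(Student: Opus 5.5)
The plan is to show the equivalent inequality $|F\setminus E|\ge 5^{-n}\delta|F|$. I will build a Vitali family of "critical" balls $B\subset F$ with $|B\setminus E|=\delta|B|$ whose fivefold enlargements cover $E$. I will then split $F$ into the part inside these enlargements and the part outside. The point is to cover all of $F$, not only $E$; covering only $E$ gives just the weaker bound $|E|\le(1+5^{-n}\delta)^{-1}|F|$.

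\emph{Step 1 (critical balls).} Fix $x\in E$ and let $\mathcal B_x$ be the family of open balls $B\subset B_1$ with $x\in B$ and $|B\cap E|>(1-\delta)|B|$.
\begin{itemize}
\item $\mathcal B_x\neq\varnothing$: since $E$ is open, every sufficiently small ball around $x$ lies in $E$ and hence belongs to $\mathcal B_x$.
\item By hypothesis, every member of $\mathcal B_x$ is contained in $F$.
\item Let $\rho_x=\sup\{\text{radius}(B):B\in\mathcal B_x\}\le1$. Choose $B_k=B_{r_k}(c_k)\in\mathcal B_x$ with $r_k\to\rho_x$, and pass to a subsequence so that $c_k\to c$. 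The limit ball $B^x=B_{\rho_x}(c)$ satisfies $B^x\subset B_1$ and $x\in\overline{B^x}$.
\item $B^x\subset F$: every point of the open ball $B^x$ lies in $B_k$ for all large $k$.
\item By continuity of $|B_r(c)\cap E|$ in $(r,c)$, we get $|B^x\cap E|\ge(1-\delta)|B^x|$.
\item Reverse inequality: if $\rho_x<1$, a slightly larger ball containing $B^x$ and still inside $B_1$ would otherwise lie in $\mathcal B_x$, contradicting maximality. If $\rho_x=1$, then $B^x=B_1$, and the hypothesis $|E|\le(1-\delta)|B_1|$ applies.
\end{itemize}
Hence $|B^x\setminus E|=\delta|B^x|$ and $B^x\subset F$.

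The containment of $x$ needs care, since $x$ lies only in $\overline{B^x}$. This is harmless for the covering: $x\in 5B^x$, or one can use $B^x$ together with a tiny neighbourhood. Doing the maximization with a fixed small margin is the step I expect to require the most care.

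\emph{Step 2 (Vitali).} The radii are bounded by $1$, so the Vitali covering lemma yields a countable disjoint subfamily $\{B_j\}$ with $E\subset\bigcup_j 5B_j$.

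\emph{Step 3 (splitting $F$).} Let $G=\bigcup_j5B_j$. The sets $\bigcup_j(B_j\setminus E)$ and $F\setminus G$ are disjoint subsets of $F\setminus E$: indeed $B_j\subset F$, and $B_j\subset G$ while $E\subset G$. Therefore
\[
|F\setminus E|\ \ge\ \delta\sum_j|B_j|+|F\setminus G|\ \ge\ 5^{-n}\delta\,|G|+|F\setminus G|\ \ge\ 5^{-n}\delta\bigl(|F\cap G|+|F\setminus G|\bigr)=5^{-n}\delta|F|.
\]
Here the second inequality uses $|G|\le5^n\sum_j|B_j|$, and the third uses $5^{-n}\delta\le1$. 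Since $|E|=|F|-|F\setminus E|$, this rearranges to $|E|\le(1-5^{-n}\delta)|F|$, which is exactly the statement of Lemma~\ref{lem:ink}.
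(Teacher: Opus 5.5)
Your proof is correct, but it runs the stopping-time construction from the opposite side to the paper's. The paper starts from an arbitrary $x\in F$ and takes $\overline B_\rho(x)\subset F$. It then expands along the increasing family $\tilde B_t=B_{(1-t)\rho+t}((1-t)x)$ up to the last time $t_x$ with $\tilde B_{t_x}\subset F$. The hypothesis, applied in contrapositive form to $\tilde B_t$ for $t>t_x$, together with continuity gives $|\tilde B_{t_x}\cap E|\le(1-\delta)|\tilde B_{t_x}|$. These balls contain their base points and so cover all of $F$. Vitali then yields $|F\setminus E|\ge\delta\sum_j|B_j|\ge 5^{-n}\delta|F|$ at once, with no splitting step; the cases $F=\varnothing$ and $F=B_1$ are trivial.

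You instead maximize the radius of high-density balls through points of $E$. This uses the hypothesis in its direct form to put the critical balls inside $F$, and you cover only $E$. Your observation $F\setminus G\subset F\setminus E$ is exactly what recovers the sharp constant instead of $(1+5^{-n}\delta)^{-1}$. Your route never needs a ball around a point of $F$ lying inside $F$, so it does not use openness of $F$, nor openness of $E$ if you work at Lebesgue density points. The paper's route is shorter. It also avoids both the compactness argument for the limiting ball and the fact that $x$ lies only in $\overline{B^x}$, since there $x\in\tilde B_0\subset\tilde B_{t_x}$. In the application the sets $A_k$ are open, so the extra generality is not needed.

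Two details in your argument should be written out.

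In the maximality step, take the enlarged ball to be $B_{(1-t)\rho_x+t}((1-t)c)$ with small $t>0$. It lies in $B_1$, has radius larger than $\rho_x$ when $\rho_x<1$, and contains every point $y\in\overline{B^x}\cap B_1$, in particular $x$. The reason is that equality throughout $|y-(1-t)c|\le\rho_x+t|c|\le(1-t)\rho_x+t$ would force $|y|=1$.

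In the covering step, what you need is not $x\in 5B^x$. You need $x\in 5B_j$ for the selected ball $B_j=B_{r_j}(c_j)$ that meets $B^x$ with $\rho_x\le 2r_j$. This holds because $|x-c_j|\le\rho_x+|c-c_j|<2\rho_x+r_j\le 5r_j$.
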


\begin{proof} We provide the proof for completeness. Note the if $F=\emptyset$ or $F=B_1$, then the assertion of the lemma is trivial. Otherwise, for a fixed  $x\in F$, we choose $\overline B_\rho(x)\subset F$. For $0\le t\le1$, let us define the nested balls
\[
 \tilde{B}_t=B_{(1-t)\rho+t}((1-t)x).
\]
Observe that $\tilde{B}_0=B_\rho(x)$, and $\tilde{B}_1=B_1$. We denote
\[
 t_x=\sup\{t\in[0,1]:\tilde{B}_t\subset F\}.
\]
Then $0<t_x<1$, $\tilde{B}_{t_x}=\underset{t<t_x}\bigcup \tilde{B}_t\subset F$, and $x\in \tilde{B}_{t_x}$. For $t>t_x$ the ball $\tilde{B}_t$ is not contained in $F$. From this, the density assumption, and the continuity of $t\mapsto|\tilde{B}_t\cap E|/|\tilde{B}_t|$, it follows that
\begin{equation} \label{ball-measur-oct-7}
 |\tilde{B}_{t_x}\cap E|\le(1-\delta)|\tilde{B}_{t_x}|.
\end{equation}
As $F \subset \underset{x \in F}\bigcup\tilde{B}_{t_x}$, by Vitali covering lemma, there is a sub-collection $\{B_j\}_{j \in \mathbb{N}}$ of disjoint balls from $\{\tilde{B}_{t_x}\}_{x \in F}$ such that $F\subset\underset{j}\bigcup (5B_j)$. Since $B_j\subset F$, it follows from \eqref{ball-measur-oct-7} that
\[
 |F\setminus E|
 \ge\sum_j|B_j\setminus E|
 \ge\delta\sum_j|B_j|
 \ge5^{-n}\delta|F|.
\]
This implies the assertion in the lemma, and the proof is complete.
\end{proof}

\smallskip
From Corollary~\ref{cor:scaled-measure} and a standard iteration method based on Lemma~\ref{lem:ink} as in \cite{Krylov, Safonov},  we can state and prove the following $L^\varepsilon$-estimate theorem. For completeness, we provide the proof.
\begin{theorem}[$L^\varepsilon$-estimate theorem]\label{thm:Leps} For every $0< \lambda <\Lambda$ and $\mu_0 \geq 0$, there are $\gamma_0>0$, $\varepsilon>0$, and $C>0$ such that the following holds. Suppose that $\Sigma$ is closed and purely one-unrectifiable, $0\le\gamma\le\gamma_0$, and $u\in C(B_2)$ is non-negative and satisfies, 
\[
\Mminus(D^2u,Du)\le1\quad\text{in }B_2
\]
in the viscosity sense, and $\displaystyle{\inf_{B_1}u\le1}$. Then
\begin{equation} \label{L-espilon-Oct5}
 |\{u>t\}\cap B_1|\le Ct^{-\varepsilon},\qquad t>0.
\end{equation}
\end{theorem}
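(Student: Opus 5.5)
We will run the standard Krylov--Safonov iteration, with Corollary~\ref{cor:scaled-measure} as the only input. Let $\gamma_0,\delta_0,M$ be the constants of Corollary~\ref{cor:measure-point}, and fix $\delta=\delta_0$. For $k\ge0$ set
\[
 E_k=\{u>M^{k+1}\}\cap B_1,\qquad F_k=\{u>M^{k}\}\cap B_1 .
\]
These sets are open because $u$ is continuous, and $E_k\subset F_k$. The goal is the geometric decay
\[
 |E_k|\le(1-5^{-n}\delta_0)^{k}|B_1| \quad\text{for all } k\ge0,
\]
obtained by applying Lemma~\ref{lem:ink} with $E=E_k$ and $F=F_k$ for each $k\ge1$. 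This gives $|E_k|\le(1-5^{-n}\delta_0)|F_k|$, and since $F_k=E_{k-1}$ the decay follows by induction. For $t>M$ choose $k$ with $M^{k}<t\le M^{k+1}$; then $|\{u>t\}\cap B_1|\le|E_{k-1}|\le C t^{-\varepsilon}$ with $\varepsilon=-\log(1-5^{-n}\delta_0)/\log M$. For $t\le M$ the bound is trivial once $C\ge M^{\varepsilon}|B_1|$.

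It remains to check the two hypotheses of Lemma~\ref{lem:ink}. For the first, $|E_k|\le(1-\delta_0)|B_1|$, we argue by contradiction. If it fails, Corollary~\ref{cor:measure-point} applied with $\kappa=M^k\ge1$ gives $u>M^k\ge1$ in $B_{3/2}$. We apply it to $v=u/M^k$, whose threshold is $\gamma/M^k\le\gamma_0$, whose exceptional set $M^{-k}\Sigma$ is still closed and purely one-unrectifiable by Lemma~\ref{lem:stability-sigma}, and whose right-hand side is at most $1$. But $u>1$ on $B_{3/2}$ contradicts $\inf_{B_1}u\le1$ (interpreted via continuity on $\overline B_1$; to avoid the boundary issue, first obtain $u>1$ on $\overline B_1\subset B_{3/2}$). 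In the same way, using $\kappa=1$ and $k=0$, we get $|\{u>M\}\cap B_1|\le(1-\delta_0)|B_1|$.

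For the second hypothesis, take a ball $B=B_{\rho}(x_0)\subset B_1$ with $|B\cap E_k|>(1-\delta_0)|B|$. We apply Corollary~\ref{cor:scaled-measure}, translated to center $x_0$, with $r=2\rho\le2$ and $\kappa=M^k$. This needs $u$ to satisfy $\Mminus(D^2u,Du)\le1\le\kappa$ in $B_{2\rho}(x_0)$. That ball may leave $B_1$, but it stays inside $B_2$, where the inequality holds, because $|x_0|+2\rho<1+\rho\le2$. The measure condition becomes $|\{u>\kappa M\}\cap B_{r/2}(x_0)|>(1-\delta_0)|B_{r/2}|$, which is exactly our assumption on $B$, and translation does not affect the exceptional set. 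The corollary then gives $u>M^k$ in $B$, so $B\subset F_k$.

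The main obstacle is confirming that rescaling by $\kappa=M^k$ keeps all the hypotheses uniform. The threshold $r\gamma/(2\kappa)$ must stay at most $\gamma_0$, the right-hand side $r^2/(4\kappa)$ must stay at most $1$, and the rescaled exceptional set must remain closed and purely one-unrectifiable. All three hold because $\kappa\ge1$, $r\le2$, and Lemma~\ref{lem:stability-sigma} applies. As a result the constants $\varepsilon$ and $C$ depend only on $n,\lambda,\Lambda,\mu_0$ and not on $\Sigma$.
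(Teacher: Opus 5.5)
Your proposal is correct and follows essentially the same argument as the paper. You use the Krylov--Safonov iteration with Lemma~\ref{lem:ink} applied to the consecutive superlevel sets $\{u>M^{k+1}\}\cap B_1\subset\{u>M^{k}\}\cap B_1$. The first hypothesis comes from Corollary~\ref{cor:measure-point} together with $\inf_{B_1}u\le1$, and the second from Corollary~\ref{cor:scaled-measure} with $\kappa=M^k$ on $B_{2\rho}(x_0)\subset B_2$. The differences are cosmetic: your indexing is shifted by one, and you re-derive $|E_k|\le(1-\delta_0)|B_1|$ by rescaling, whereas the paper simply notes that $E_k$ is contained in $\{u>M\}\cap B_1$.
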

\begin{proof} Let $M, \delta_0$ be the constants defined in Corollary~\ref{cor:measure-point}. Let $\delta = \delta_0$, and 
\[
 A_k=\{u>M^k\}\cap B_1,\qquad k\ge1.
\]
Note that if $|A_1|>(1-\delta)|B_1|$, then it folllows from Corollary~\ref{cor:measure-point} that $u>1$ in $B_{3/2}$. From this and as $u \in C(\overline B_1)$, we see that that $\underset{B_1}\inf  u>1$, and we obtain a contradiction. Hence
\begin{equation} \label{A-1-oct-5}
 |A_1|\le(1-\delta)|B_1|.
\end{equation}
\smallskip
Next, we claim that
\begin{equation} \label{A-k-oct-6}
 |A_{k+1}|\le(1-5^{-n}\delta)|A_k|, \quad \forall \ k \geq 1.
\end{equation}
To prove the claim, we apply Lemma~\ref{lem:ink} with $E=A_{k+1}$ and $F=A_k$. We then need to verify the assumptions in Lemma~\ref{lem:ink}. Since $E \subset A_1$,  the first assumption in  Lemma~\ref{lem:ink}, $|E| \leq (1-\delta)|B_1|$, holds due to \eqref{A-1-oct-5}. Now, suppose that $B_r(x)\subset B_1$ satisfies
\[
 |A_{k+1}\cap B_r(x)|>(1-\delta)|B_r|.
\]
Since $B_{2r}(x)\subset B_2$ and $M^k\ge1$, we apply Corollary~\ref{cor:scaled-measure} to $u$ in $B_{2r}(x)$, with $\kappa=M^k$ to conclude that $u>M^k$ in $B_r(x)$, and therefore $B_r(x)\subset A_k$, and this verifies second assumption in Lemma~\ref{lem:ink}. Thus, \eqref{A-k-oct-6} follows from Lemma~\ref{lem:ink}.

\smallskip
We take $\vartheta=1-5^{-n}\delta$ and $\varepsilon=-\log\vartheta/\log M>0$, and we iterate \eqref{A-k-oct-6} with the fact that $1-\delta < 1 -5^{-n} \delta =\vartheta$ to yield
\begin{equation} \label{A-k-decay-est}
 |A_k|\le |B_1|\vartheta^k=|B_1|M^{-\varepsilon k},\qquad k\ge1.
\end{equation}
If $t\ge M$, we choose $k\ge1$ such that $M^k\le t<M^{k+1}$ and then apply \eqref{A-k-oct-6} to infer that
\[
 |\{u>t\}\cap B_1|\le|A_k|
 \le |B_1| M^{-\varepsilon k} \leq |B_1|M^\varepsilon t^{-\varepsilon}.
\]
On the other hand, when $0<t \leq M$,  it is trivial that
\[ |\{u>t\}\cap B_1|\le|B_1|  \leq |B_1| M^\epsilon t^{-\epsilon}.
\]
In other words, we have proved that
 \[ |\{u>t\}\cap B_1|\le|B_1|  \leq |B_1| M^\epsilon t^{-\epsilon}, \quad \forall \ t>0
\]
which infers \eqref{L-espilon-Oct5} and the proof is complete.
\end{proof}
\section{Proofs of the main results} \label{Proof-Sec}
We now have all ingredients to prove the main results of the paper. We begin with the proof of Theorem~\ref{thm:harnack-intro}.
\begin{proof}[Proof of Theorem~\ref{thm:harnack-intro}] Because the argument requires some technical room, we begin by establishing a Harnack inequality for the equation that is posed in $B_2$. Let $c\ge0$, and suppose that $u\in C(B_2)$ is non-negative and satisfies, in the viscosity sense,
\begin{equation} \label{eqn-oct-5}
 \Mminus(D^2u,Du)\le c,\qquad \Mplus(D^2u,Du)\ge-c \quad \text{in} \quad B_2,
\end{equation}
where $\Sigma$ is closed and purely one-unrectifiable. We claim that
\begin{equation}\label{harnack-local-B2}
 \sup_{B_{1/2}}u\le C\left(\inf_{B_{1/2}}u+c+\gamma\right),
\end{equation}
where $C=C(n,\lambda,\Lambda,\mu_0)$.
For a small $\zeta>0$, let us denote
\[
 A=\max\left\{c+\inf_{B_{1/2}}u,\frac\gamma{\gamma_0}\right\}+\zeta,
\]
where $\gamma_0$ is chosen to work in Corollary~\ref{cor:measure-point} and Theorem~\ref{thm:Leps}. By using the scaling $u \mapsto u/A$ and replace $c$ by $c/A\le1$, the threshold $\gamma$ by $\gamma/A\le\gamma_0$, and the set $\Sigma$ by $A^{-1}\Sigma$, it suffices to prove the bound $\underset{B_{1/2}} \sup u$ by a universal constant under the assumption that $\underset{B_{1/2}} \inf  u \leq 1$.

\smallskip
Let \(\beta>0\) to be determined, and 
\begin{equation} \label{t-def-oct-5}
 t=\max_{\overline B_1}u(x)(1-|x|)^\beta,
\end{equation}
where the product is extended by zero on \(\partial B_1\).  Note that as \(u\) is continuous in \(B_2\), the maximum in the definition above is attained. Then, we define
\[
 h_t(x)=t(1-|x|)^{-\beta},\qquad x\in B_1.
\]
It is clear from the definition that \(h_t\ge u\) in \(B_1\). Also, note that if \(t\le1\), the bound $\underset{B_{1/2}} \sup u$ is immediate. Hence, we only need to consider the case that \(t>1\). In this case, the maximum in \eqref{t-def-oct-5} is attained at some \(x_0\in B_1\), and we denote
\[
 H_0 = u(x_0)=h_t(x_0) =t(2r)^{-\beta}\ge1\quad \text{for} \quad r=\frac{1-|x_0|}{2} \in (0, 1/2].
\]
Since $\underset{B_1}\inf u\le1$ and $\gamma\le\gamma_0$, it follows from  Theorem~\ref{thm:Leps} that
\begin{equation} \label{measure-1-oct-5}
 |\{u>H_0/2\}\cap B_r(x_0)|
 \le 2^\varepsilon C_0 H_0^{-\varepsilon}
 =2^\varepsilon C_0 t^{-\varepsilon}(2r)^{\beta\varepsilon},
\end{equation}
where $C_0>0$ and $\varepsilon>0$ depend only on $n,\lambda,\Lambda,\mu_0$.

\smallskip
Next, let $M,\delta_0$ be as in Corollary~\ref{cor:measure-point}, $\delta = \delta_0$, and fix
\[
 \beta=\max\left\{1,\frac n\varepsilon\right\}.
\]
Then, we choose $\mu\in(0,1/2)$ sufficiently small, depending only on $M$ and $\beta$,
such that
\[
 M\bigl((1-\mu)^{-\beta}-1\bigr)\le\frac14.
\]
Since $\beta\ge1$ and $(1-\mu)^{-\beta}-1\ge\beta\mu\ge\mu$, we obtain
\[
 \frac{\mu^2}{(1-\mu)^{-\beta}-1}\le\mu\le1,
 \quad \text{and} \quad
 \frac{\mu\gamma}{(1-\mu)^{-\beta}-1}
 \le\gamma\le\gamma_0.
\]
On the other hand,  recall that
\[
 u(x)\le h_t(x)=t(1-|x|)^{-\beta},\qquad
 2r=1-|x_0|, \quad \text{and} \quad H_0=t(2r)^{-\beta}.
\]
Then, for every $x\in B_{2\mu r}(x_0)$, we have
\[
 1-|x|
 \ge1-|x_0|-|x-x_0|
 >2r-2\mu r
 =2r(1-\mu).
\]
Therefore,
\[
 u(x)
 \le t(1-|x|)^{-\beta}
 \le t\bigl(2r(1-\mu)\bigr)^{-\beta}
 =(1-\mu)^{-\beta}H_0,
 \qquad x\in B_{2\mu r}(x_0).
\]
Now, let us define $A_\mu = (1-\mu)^{-\beta} -1$, and
\[
 v(x)=\frac{(1-\mu)^{-\beta} H_0-u(x_0+\mu r x)}{A_\mu H_0},\qquad x\in B_2.
\]
Then \(v\ge0\), \(v(0)=1\), and the corresponding transformed exceptional set for $v$ is \(-\frac{\mu r}{A_\mu H_0}\Sigma\), hence purely one-unrectifiable. Moreover, at active gradients,
\begin{align*}
& \Pm(D^2v)-\mu_0|Dv|\le \Pm(D^2v)-\mu r\mu_0|Dv| \\
 & =-\frac{(\mu r)^2}{A_\mu H_0}\bigl(\Pp(D^2u)+\mu_0|Du|\bigr)
 \le\frac{(\mu r)^2}{A_\mu H_0}\le1.
\end{align*}
The corresponding transformed threshold for $v$ is
\[
 \frac{\mu r\gamma}{A_\mu H_0}\le\frac{\mu\gamma}{A_\mu}\le\gamma_0.
\]
Since \(v(0)=1\), it follows from the contrapositive of Corollary~\ref{cor:measure-point} that
\begin{equation} \label{v-density-oct-6}
 |\{v\le M\}\cap B_1|\ge\delta|B_1|.
\end{equation}
For every $x \in B_1$ such that \(v(x) \le M\), we have
\[
 u(x_0+\mu r x)
 \ge H_0((1-\mu)^{-\beta} -MA_\mu)
 =H_0\bigl(1-(M-1)A_\mu\bigr)
 \ge\frac{3H_0}{4}>\frac{H_0}{2}.
\]
Hence, we infer from \eqref{v-density-oct-6} that
\begin{equation} \label{measure-2-oct-5}
 |\{u>H_0/2\}\cap B_{\mu r}(x_0)|\ge\delta|B_1|(\mu r)^n.
\end{equation}
Since $2r\le1$ and $\beta\varepsilon\ge n$, it follows from the two measure bounds \eqref{measure-1-oct-5} and \eqref{measure-2-oct-5} that
\[
 \delta|B_1|\mu^n r^n
 \le 2^\varepsilon C_0 t^{-\varepsilon}(2r)^{\beta\varepsilon}
 \le 2^{n+\varepsilon}C_0t^{-\varepsilon}r^n.
\]
Hence
\[
 t\le\left(\frac{2^{n+\varepsilon}C_0}{\delta|B_1|\mu^n}\right)^{1/\varepsilon}.
\]
Since $u\le h_t$ in $B_1$, we have 
\[ \sup_{B_{1/2}}u\le2^\beta\max\{1,t\}\le C,\]
where $C=C(n,\lambda,\Lambda,\mu_0)$. Note that when undoing the normalization, and letting $\zeta\downarrow0$, we obtain
\[
 \sup_{B_{1/2}}u
 \le C\max\left\{c+\inf_{B_{1/2}}u,\frac\gamma{\gamma_0}\right\}
 \le C\left(c+\inf_{B_{1/2}}u+\gamma\right),
\]
for $C = C(n, \lambda, \Lambda, \mu_0)>0$. This proves \eqref{harnack-local-B2}.

\smallskip
We now assume that $u$ satisfies the hypotheses of Theorem~\ref{thm:harnack-intro} in $B_1$. Let us denote
\[
 C_F=\|f\|_{L^\infty(B_1)}+\|F(0,0,\cdot)\|_{L^\infty(B_1)}.
\]
By Proposition~\ref{prop:extremal-reduction}, in the viscosity sense, $u$ satisfies
\[
 \Mminus(D^2u,Du)\le C_F,\qquad
 \Mplus(D^2u,Du)\ge-C_F\quad\text{in }B_1.
\]
Fix $z\in B_{1/2}$ and define $U_z(y)=u(z+y/4)$ for $y\in B_2$. Since $B_{1/2}(z)\subset B_1$, this function is well defined. Its corresponding threshold is $\gamma/4$, its exceptional set is $\Sigma/4$, and its drift coefficient is $\mu_0/4$. Its extremal inequalities have right-hand sides $C_F/16$ and $-C_F/16$. Increasing the drift coefficient to $\mu_0$ preserves both inequalities. Thus the estimate \eqref{harnack-local-B2} implies that
\begin{equation}\label{harnack-small-ball}
 \sup_{B_{1/8}(z)}u
 \le C_*\left(\inf_{B_{1/8}(z)}u+C_F+\gamma\right),
\end{equation}
where $C_*=C_*(n,\lambda,\Lambda,\mu_0)\ge1$ is independent of $z$.

\smallskip
Now, let $x,y\in B_{1/2}$ and 
\[
 z_i=y+\frac{i}{8}(x-y),\qquad i=0,\ldots,8.
\]
Then $z_i\in B_{1/2}$ and $|z_{i+1}-z_i|=|x-y|/8<1/8$. Applying \eqref{harnack-small-ball} with center $z_i$ gives
\[
 u(z_{i+1})\le C_*\bigl(u(z_i)+C_F+\gamma\bigr),
 \qquad i=0,\ldots,7.
\]
Iterating these eight inequalities, we obtain
\[
 u(x)\le C_*^8u(y)
       +(C_F+\gamma)\sum_{i=1}^{8}C_*^i
 \le C\bigl(u(y)+C_F+\gamma\bigr).
\]
Taking the supremum over $x\in B_{1/2}$ and the infimum over $y\in B_{1/2}$, we obtain the desired Harnack inequality in the theorem.
\end{proof}
From Theorem~\ref{thm:harnack-intro}, the proof of  Theorem~\ref{thm:holder-intro} is standard and it follows the known steps as in \cite{Krylov, Safonov}. However, there are several various non-standard manipulations related to $\gamma$. We provide the proof for completeness.
\begin{proof}[Proof of Theorem~\ref{thm:holder-intro}]
By Proposition~\ref{prop:extremal-reduction}, it follows that, in the viscosity sense, $u$ satisfies
\begin{equation} \label{Pucci-Oct-5}
 \Mminus(D^2u,Du)\le C_F, \quad \text{and}\quad \Mplus(D^2u,Du)\ge-C_F \quad \text{in} \quad B_1
\end{equation}
where
\[
C_F= \|f\|_{L^\infty(B_1)}+\|F(0,0,\cdot)\|_{L^\infty(B_1)}.
\]
We first derive a scale-invariant oscillation estimate. Fix $x_0\in B_{1/2}$ and $0<R\le1/2$, and write
\[
 M_R=\sup_{B_R(x_0)}u,\qquad m_R=\inf_{B_R(x_0)}u,\qquad \omega(R)=M_R-m_R.
\]
Put $r=R/2$ and we define in $B_{2r}(x_0)=B_R(x_0)$ the following two functions
\[
 v=u-m_R+r\gamma,\qquad w=M_R-u+r\gamma.
\]
The function $v$ is a viscosity solution for the same equation as $u$. On the other hand, $w$ is a viscosity solution of
\[
F^*(D^2w, Dw,x) = f^*(x) \quad \text{in} \quad B_R(x_0) \quad \text{when} \quad Dw \not \in (\bar{B}_\gamma \cup (-\Sigma))
\]
where
\[
 F^*(X,p,x)=-F(-X,-p,x), \quad \text{and} \quad f^*(x)=-f(x), \quad (X, p, x) \in \mathbb{S}^n \times \R^n \times B_1.
\]
The operator $F^*$ satisfies \eqref{F-cond} as $F$, and $-\Sigma$ is purely one-unrectifiable.

\smallskip
Apply Theorem~\ref{thm:harnack-intro} to $v$ and $w$ in $B_r(x_0)$, after the scaling $x \mapsto x_0+ry$ with $y\in B_1$. The resulting Harnack estimates hold in $B_{r/2}(x_0)=B_{R/4}(x_0)$. We note that for $v$, the scaled operator is
\[
 F_r(X,p,y)=r^2F(r^{-2}X,r^{-1}p,x_0+ry),
\]
so its ellipticity constants are $\lambda,\Lambda$, its gradient coefficient is $r\mu_0\le\mu_0$, its corresponding threshold is $r\gamma$, and its corresponding right-hand-side bound in \eqref{Pucci-Oct-5} is $r^2C_F$. The same statements hold for the reflected operator associated with $w$. Since
\[
 \inf_{B_{r/2}(x_0)}v\ge r\gamma \quad \text{and} \quad \inf_{B_{r/2}(x_0)}w\ge r\gamma,
\]
the additive term $r\gamma$ in Theorem~\ref{thm:harnack-intro} is at most either infimum. Hence the two Harnack inequalities hold with a constant $C=C(n,\lambda,\Lambda,\mu_0)>1$ such that
\[
 M_{R/4}-m_R+r\gamma\le C\bigl(m_{R/4}-m_R+r\gamma+r^2C_F\bigr),
\]
and
\[
 M_R-m_{R/4}+r\gamma\le C\bigl(M_R-M_{R/4}+r\gamma+r^2C_F\bigr).
\]
By adding these two inequalities, we obtain
\begin{equation*} 
 \omega(R)+\omega(R/4)+2r\gamma\le C\bigl(\omega(R)-\omega(R/4)+2r\gamma+2r^2C_F\bigr).
\end{equation*}
Therefore,
\begin{equation} \label{osc-R-oct-5}
 \omega(R/4)\le\theta\omega(R)+C_0(R\gamma+R^2C_F), \quad \text{with} \quad \theta=\frac{C-1}{C+1}<1,
\end{equation}
where $C_0$ depends only on $n,\lambda,\Lambda,\mu_0$.

\smallskip
Choose $\alpha\in(0,1)$ so small that $\theta<4^{-\alpha}$. Iterating the inequality \eqref{osc-R-oct-5} on the radii $R_j=4^{-j}R_0$, we infer that
\begin{equation} \label{omega-R-j-oct-5}
 \omega(R_j)\le\theta^j\omega(R_0)+C_0\sum_{i=0}^{j-1}\theta^{j-1-i}\bigl(R_i\gamma+R_i^2C_F\bigr).
\end{equation}
Because $R_i=4^{-i}R_0$ and $4^\alpha\theta<1$, we have
\[
 \omega(R_j)\le C4^{-\alpha j}\bigl(\omega(R_0)+R_0\gamma+R_0^2C_F\bigr).
\]
Now, recall that $K=\|u\|_{L^\infty(B_1)}+C_F$. Note that $K=0$, then $u=0$ and the result is immediate. Hence, it remains to consider $K>0$. Let
\[
 R_0 =
\left\{
\begin{array}{ll} 
\frac14\min\left\{1,\frac K\gamma\right\} & \quad \text{if} \quad \gamma>0, \\
\frac14 & \quad \text{if} \quad \gamma =0.
\end{array} \right.
\]
Then $B_{R_0}(x_0)\subset B_1$ and
\[
 \omega(R_0)+R_0\gamma+R_0^2C_F\le CK.
\]
From this and \eqref{omega-R-j-oct-5},  it follows that
\[
 \omega(R_j)
 \le C4^{-\alpha j}
       \bigl(\omega(R_0)+R_0\gamma+R_0^2C_F\bigr)
 \le CK\left(\frac{R_j}{R_0}\right)^\alpha.
\]
Consequently, for $0<s\le R_0$, we choose $j$ so that $R_{j+1}<s\le R_j$ and obtain
\begin{equation} \label{omega-s-oct-5}
 \operatorname{osc}_{B_s(x_0)}u\le CK\left(\frac{s}{R_0}\right)^\alpha.
\end{equation}

\smallskip
Now, let $x,y\in B_{1/2}$ and $d=|x-y|$. If $d\le R_0/4$, we apply \eqref{omega-s-oct-5} for the ball $B_{4d}(x)$. Since $y\in B_{4d}(x)$, we infer that
\[
 |u(x)-u(y)|\le \operatorname{osc}_{B_{4d}(x)}u\le CK R_0^{-\alpha}d^\alpha.
\]
On the other hand, if $d>R_0/4$, we also have
\[
 |u(x)-u(y)|\le2\|u\|_{L^\infty(B_1)}\le CK R_0^{-\alpha}d^\alpha.
\]
Hence,
\[
|u(x)-u(y)|\le CK R_0^{-\alpha}d^\alpha, \quad \forall \ x, y \in B_{1/2},
\]
where $C= C(n, \lambda, \Lambda, \mu_0)>0$. Finally, from the choice of $R_0$, we have $R_0^{-\alpha}\le C(1+(\gamma/K)^\alpha)$, and therefore
\[
 [u]_{C^\alpha(B_{1/2})}
 \le CKR_0^{-\alpha}
 \le C\left(K+K^{1-\alpha}\gamma^\alpha\right).
\]
The proof is then complete.
\end{proof}
\section{Examples and counterexamples} \label{Example-Sec}
We conclude the paper with examples showing both the size of the admissible class $\Sigma$ and the necessity of excluding rectifiable gradient channels. 
We note that the counterexamples do not assert that pure one-unrectifiability is necessary for every possible exceptional set. They show that one-dimensional rectifiable channels are obstructions and, when $n\ge2$, that a condition based only on \(n\)-dimensional measure or density cannot suffice. Pure one-unrectifiability is the geometric condition used in our adaptation of the cusp and barrier contact arguments of \cite{ImbertSilvestre}.

\smallskip
The following three examples provide the admissible sets $\Sigma$ that our main theorems are applicable.
\begin{example}  Let \(\Sigma=\{p_j:j\ge1\}\) with a given sequence $\{p_j\}_j \in \R^n$ satisfying \(|p_j|\to\infty\). Then \(\Hh^1(\Sigma)=0\) and \(\Sigma\) is purely one-unrectifiable. The equation is therefore not needed at an arbitrary unbounded discrete family of large gradients, and Theorems~\ref{thm:holder-intro} and \ref{thm:harnack-intro} apply.
\end{example}

\begin{example} If \(\Sigma\) is closed and \(\dim_{\Hh}\Sigma<1\), then \(\Hh^1(\Sigma)=0\) and $\Sigma$ is  purely one-unrectifiable and  Theorems~\ref{thm:holder-intro} and \ref{thm:harnack-intro} apply. Therefore the admissible class $\Sigma$ includes unbounded Cantor-type sets of Hausdorff dimension strictly smaller than one.
\end{example}

\begin{example}  Assume \(n\ge2\), let \(\mathcal K\subset\R^2\) be the four-corner Cantor set defined in \eqref{four-corner-Cantor}. We embed $\mathcal K$ in \(\R^n\), and let
\[
 \Sigma=\bigcup_{j=1}^\infty(2^je_1+\mathcal K),
\]
where $e_1 = (1, 0, 0, \ldots, 0) \in \R^n$.  The union is locally finite, hence closed. Note also that $\Hh^1(\mathcal K) \in (0, \infty)$ and $\mathcal K$ is purely one-unrectifiable as indicated in \cite[Example 15.2 and the discussion following
Definition 15.3, pp.~203-204]{Mattila} Each translate of $\mathcal K$ is purely one-unrectifiable, and it follows from Lemma~\ref{lem:stability-sigma} that \(\Sigma\) is purely one-unrectifiable. Moreover, every translate of $\mathcal K$ has positive finite \(\Hh^1\)-measure. Thus the admissible class of exceptional set can be unbounded and Hausdorff one-dimensional.
\end{example}

\smallskip
In the following two counterexamples, one may fix $F(X,p,x)=\tr X$, $f=0$, and $\gamma=0$. At a contact with a smooth function $u$, every test function has gradient $Du$, so no viscosity condition is imposed when $Du\in\Sigma$. 

\begin{example} \label{ex:line}
Let \(\Sigma=\R e_1\). For every smooth function depending only on \(x_1\), the gradient lies in \(\Sigma\), hence they are viscosity solutions to considered class of fully nonlinear equation and both extremal inequalities. In particular, for each $k \in \mathbb{N}$, let 
\[
 u_k(x)=\sin(kx_1), \quad x = (x_1, x') \in \R \times \R^{n-1}, \quad |x| <1.
\]
We have
\[
Du_k(x)=k\cos(kx_1)e_1 \in \Sigma,
\qquad
\|Du_k\|_{L^\infty(B_1)}=k.
\]
Also, for \(\alpha\in(0,1)\), since \(|u_k|\le1\), for all \(x,y\in B_{1/2}\),
\[
|u_k(x)-u_k(y)|
\le \min\{k|x-y|,2\}
\le 2^{1-\alpha}k^\alpha|x-y|^\alpha.
\]
Consequently,
\[
[u_k]_{C^\alpha(B_{1/2})}
\le 2^{1-\alpha}k^\alpha.
\]
On the other hand, for \(k>\pi\), we choose
\[
x_k=\frac{\pi}{2k}e_1,
\quad \text{and} \quad
y_k=-\frac{\pi}{2k}e_1.
\]
Both points belong to \(B_{1/2}\), and moreover
\[
u_k(x_k)=1,
\quad
u_k(y_k)=-1,
\quad \text{and} \quad
|x_k-y_k|=\frac{\pi}{k},
\]
and then
\[
[u_k]_{C^\alpha(B_{1/2})}
\ge
\frac{|u_k(x_k)-u_k(y_k)|}{|x_k-y_k|^\alpha}
=
\frac{2}{\pi^\alpha}k^\alpha.
\]
Thus, for \(k>\pi\),
\[
\frac{2}{\pi^\alpha}k^\alpha
\le [u_k]_{C^\alpha(B_{1/2})}
\le 2^{1-\alpha}k^\alpha.
\]
In particular, the H\"older seminorm grows at order \(k^\alpha\),
and hence tends to infinity as \(k\to\infty\), despite
\(\|u_k\|_{L^\infty(B_1)}\le1\). As such, the H\"{o}lder estimate in Theorem \ref{thm:holder-intro} does not hold.

\smallskip
On the other hand, for each $k\in \mathbb{N}$, let
\[
 v_k(x)=1+kx_1^2, \quad x = (x_1, x') \in \R \times \R^{n-1}, \quad |x| <1.
\]
We see that $v_k(x) \geq 0$ in $B_1$, and
\[
 \inf_{B_{1/2}}v_k=1 \quad 
 \qquad
 \sup_{B_{1/2}}v_k=1+\frac k4\longrightarrow\infty.
\]
Thus the Harnack inequality in Theorem \ref{thm:harnack-intro}  does not hold.
\end{example}

\begin{example} \label{ex:ray} Let \(\Sigma=\{te_1:t\ge0\}\), and for each $k \in \mathbb{N}$, we define
\[
u_k(x)=\frac{1+\tanh(kx_1)}{2},
\qquad x=(x_1,x')\in \R \times \R^{n-1}: \ |x| <1.
\]
Then, we have
\[
Du_k(x)
=\frac{k}{2}\operatorname{sech}^2(kx_1)e_1
=\frac{k}{2\cosh^2(kx_1)}e_1\in\Sigma
\]
and $u_k$ is a viscosity solution of \eqref{intro-PDE}.  Moreover, \(0<u_k<1\) in \(B_1\). In addition, for every \(\alpha\in(0,1)\), 
and for \(k>2\), choose \(x_k=k^{-1}e_1\) and
\(y_k=-k^{-1}e_1\), which belong to \(B_{1/2}\).
Then
\[
|u_k(x_k)-u_k(y_k)|=\tanh(1),
\quad \text{and} \quad
|x_k-y_k|=\frac{2}{k}.
\]
Therefore,
\[
[u_k]_{C^\alpha(B_{1/2})}
\ge
\frac{|u_k(x_k)-u_k(y_k)|}{|x_k-y_k|^\alpha} =
\frac{\tanh(1)}{2^\alpha}k^\alpha.
\]
On the other hand, note that \(\|Du_k\|_{L^\infty(B_1)}=k/2\) and
\(0<u_k<1\). Then, it follows that for all \(x,y\in B_{1/2}\),
\[
|u_k(x)-u_k(y)|
\le \min\left\{\frac{k}{2}|x-y|,1\right\}
\le \left(\frac{k}{2}|x-y|\right)^\alpha.
\]
Thus, for \(k>2\),
\[
\frac{\tanh(1)}{2^\alpha}k^\alpha
\le [u_k]_{C^\alpha(B_{1/2})}
\le \frac{1}{2^\alpha}k^\alpha.
\]
Hence, the H\"{o}lder seminorm grows at order \(k^\alpha\), and estimate in Theorem \ref{thm:holder-intro} does not hold for $u_k$.
 
\smallskip
Similarly, for each $k \in \mathbb{N}$, let us define
\[
 v_k(x)=1+e^{kx_1} \quad x = (x_1, x') \in \R \times \R^{n-1}: \ |x| <1.
\]
Again \(Dv_k\in\Sigma\) everywhere and $v_k$ is a viscosity solution to \eqref{intro-PDE}. Also,
\[
 \inf_{B_{1/2}}v_k=1+e^{-k/2}\to1,
 \qquad
 \sup_{B_{1/2}}v_k=1+e^{k/2}\to\infty, \quad \text{as} \quad k \to \infty.
\]
Hence, there is no uniform Harnack inequality for $v_k$, and Theorem \ref{thm:harnack-intro} does not hold.
\end{example}
\section*{Acknowledgement}
The research of T. Phan was partially supported by Simons Foundation, grant \# 769369.  The author would like to thank professor Sungwon Cho (Gwangju National University of Education) and Dr. Junyuan Fang (Louisiana State University) for the thorough and stimulating discussions on the results and methods developed in \cite{ImbertSilvestre, Krylov, Mooney}. 

\end{document}